\tikzset{
base/.style = {circle, draw=black, text centered},
>=Latex
}
\newtheorem{theorem}{Theorem}
\newtheorem{proposition}[theorem]{Proposition} 
\newtheorem{lemma}[theorem]{Lemma} 
\newtheorem{corollary}[theorem]{Corollary} 
\theoremstyle{definition}
\newtheorem*{definition}{Definition}
\newtheorem*{remark}{Remark}
\newcommand{\Q}{\mathbb{Q}}
\newcommand{\C}{\mathbb{C}}
\newcommand{\Z}{\mathbb{Z}}
\newcommand{\cJ}{\mathcal{J}}
\newcommand{\abs}[1]{\left\lvert #1 \right\rvert}
\newcommand{\set}[2]{\left\{#1 \ : \ #2\right\}}
\newcommand{\conv}[1]{\underset{#1}\longrightarrow}
\DeclareMathOperator{\id}{id}
\DeclareMathOperator{\subspan}{span}
\DeclareMathOperator{\Imag}{Im}
\newcommand\restr[2]{{
  \left.\kern-\nulldelimiterspace 
  #1 
  \vphantom{\big|} 
  \right|_{#2} 
  }}
\begin{document}

\title{Dynamical spectrum via determinant-free linear algebra}
\author{Joseph Horan}
\date{\today}

\maketitle

\begin{abstract}
We consider a sequence of matrices that are associated to Markov dynamical systems and use determinant-free linear algebra techniques (as well as some algebra and complex analysis) to rigorously estimate the eigenvalues of every matrix simultaneously without doing any calculations on the matrices themselves. As a corollary, we obtain mixing rates for every system at once, as well as symmetry properties of densities associated to the system; we also find the spectral properties of a sequence of related factor systems.
\end{abstract}





\section*{Introduction}

Consider, for the time being, a \emph{stochastic} $d$-by-$d$ matrix $P$. The matrix $P$ represents a finite-dimensional \emph{Markov chain}, a stochastic model where states transition to one another with some probability at discrete time steps according to the entries in the matrix. Thus, if at time $0$ the probabilities of being in each of the $d$ states are given by the vector $x$, then the probabilities of being in each of the $d$ states at time $1$ are given by $Px$ ($P$ acting on $x$); see Figure \ref{fig:markov-chain}. The asymptotic properties of the Markov chain, such as what the stationary distribution is (if it exists) and the rate at which the process converges to that distribution, are determined by the spectral theory of the matrix $P$. Some linear algebra, potentially including some numerical computation, then allows us to compute these desired quantities. In particular, in the case that the Markov chain is \emph{mixing}, we wish to find the modulus of the second-largest eigenvalue(s), which tells us the rate at which the Markov chain converges to its stationary distribution: the mixing time is at most proportional to the reciprocal of the logarithm of the modulus of the second-largest eigenvalue.\footnote{For the proof of this fact and for more on Markov chains, see the book by Levin, Peres, and Wilmer \cite{mc-mixing}; applications include statistical mechanics and Markov chain Monte Carlo (MCMC).}

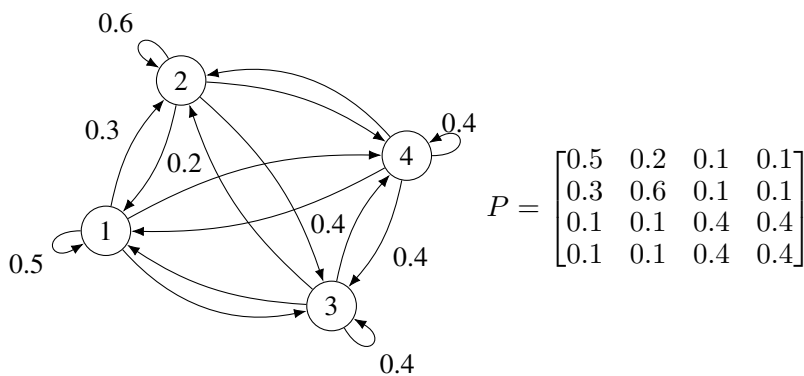
\begin{figure}[bp]
\centering
\begin{subfigure}{0.45\textwidth}
\centering
\begin{tikzpicture}[scale=1]
\node (state1) at (0,0) [base] {1};
\node (state2) at (1,2) [base] {2};
\node (state3) at (3,-1) [base] {3};
\node (state4) at (4,1) [base] {4};

\draw[->] (state1) to [bend left=15] node[auto] {0.3} (state2);
\draw[->] (state2) to [bend left=15] node[right] {0.2} (state1);
\draw[->] (state1) to [out=180,in=210,loop] node[auto,swap] {0.5} (state1);
\draw[->] (state2) to [out=120,in=150,loop] node[auto,swap] {0.6} (state2);

\draw[->] (state1) to [bend right=30] node[auto] {} (state3);
\draw[->] (state3) to [bend left=15] node[auto] {} (state1);
\draw[->] (state2) to [bend left=15] node[auto] {} (state4);
\draw[->] (state4) to [bend right=30] node[auto] {} (state2);

\draw[->] (state2) to [bend left=15] node[auto] {} (state3);
\draw[->] (state1) to [bend left=15] node[auto] {} (state4);
\draw[->] (state4) to [bend left=15] node[auto] {} (state1);
\draw[->] (state3) to [bend left=15] node[auto] {} (state2);

\draw[->] (state3) to [bend left=15] node[left] {0.4} (state4);
\draw[->] (state4) to [bend left=15] node[auto] {0.4} (state3);
\draw[->] (state3) to [out=300,in=330,loop] node[auto,swap] {0.4} (state3);
\draw[->] (state4) to [out=0,in=30,loop] node[above] {0.4} (state4);
\end{tikzpicture}
\end{subfigure}
\begin{subfigure}{0.45\textwidth}
\centering
\[P = \begin{bmatrix}
0.5 & 0.2 & 0.1 & 0.1 \\
0.3 & 0.6 & 0.1 & 0.1 \\
0.1 & 0.1 & 0.4 & 0.4 \\
0.1 & 0.1 & 0.4 & 0.4
\end{bmatrix} \]
\end{subfigure}
\caption{A Markov chain with four states and its associated transition matrix.}
\label{fig:markov-chain}
\end{figure}

In the field of dynamical systems, we often start with a map $T$ on some state space $X$, and we want to answer questions such as ``what happens to most of the orbits of $T$ over a long time?'' and ``do regions of $X$ mix together over time, and at what rate?'' These questions are less about looking at individual orbits of points under $T$ and more about looking at what happens \emph{on average}. Specifically, we can learn much about the dynamical system $(X,T)$ by studying how \emph{probability densities} on $X$ change over time under the action of $T$. 

To formalize this process and to lead into the focus of this article, we consider a specific class of piecewise linear maps acting on $[-1,1]$.
\begin{definition} 
Define $T_{\kappa} : [-1,1] \to [-1,1]$ by:
\[ T_{\kappa}(x) = \begin{cases}
2(1+\kappa)(x+1) - 1, & x\in [-1,-1/2], \\
-2(1+\kappa)x - 1, & x\in [-1/2,0), \\
0, & x = 0, \\
-2(1+\kappa)x + 1, & x\in (0,1/2], \\
2(1+\kappa)(x-1) + 1, & x\in [1/2,1].
\end{cases} \]
We call $T_{\kappa}$ a \emph{paired tent map}, because there are two tents paired together. See Figure \ref{fig:ptm-eg1} for an illustration.\footnote{In general, the tents could be different; see Section 4 of \cite{horan-pf-thm}.}
\end{definition}

\begin{figure}[tbp]
\centering
\includegraphics[width=0.75\textwidth]{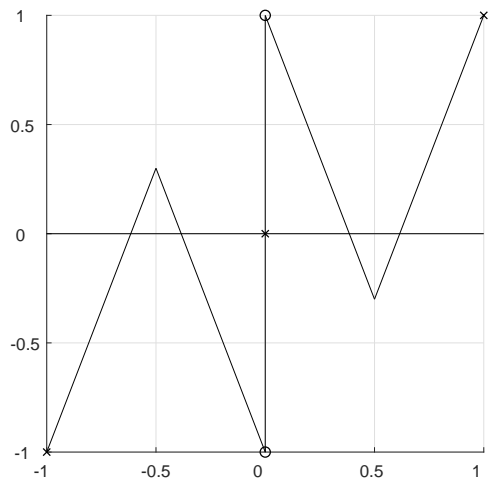}
\captionsetup{width=0.75\textwidth,labelfont=bf}
\caption{The paired tent map, with parameter $\kappa = 0.3$.}
\label{fig:ptm-eg1}
\end{figure}

Let $f$ be a probability density on $[-1,1]$; that is, a non-negative measurable function defined on $[-1,1]$ with integral equal to $1$. As a rough analogy, one could imagine that the space $[-1,1]$ is a bowl of banana bread batter into which one has placed chocolate chips, and $f$ is the density of chocolate chips. Applying the map $T_{\kappa}$ stirs the space up, moving the chocolate chips around; there is then a new density, call it $P_{\kappa}f$, that describes the new locations of the chocolate chips. Some parts of the batter may have more chocolate chips than before, and some fewer, but the total amount of chocolate chips has not changed. It turns out that the operator $P_{\kappa}$ can be defined on all integrable functions (that is, on $L^1(\lambda)$, where $\lambda$ is the normalized Lebesgue measure), and is bounded and linear; we call $P_{\kappa}$ the \emph{Perron-Frobenius operator associated to} $T_{\kappa}$.\footnote{To be rigorous, $P_{\kappa}f$ is the Radon-Nikodym derivative of the measure $A \mapsto \lambda(f\mathds{1}_{T^{-1}(A)})$, which exists because $T_{\kappa}$ is a non-singular map. See, for example, Chapter 4 of \cite{gora-boyarsky}.} It also turns out that there is an invariant subspace for $P_{\kappa}$ of $L^1$ called $BV$ (short for \emph{bounded variation}) on which the spectrum of $P_\kappa$ is well-behaved, and so we restrict our focus to $BV$ for the remainder of this article.\footnote{As shown by Ding, Du, and Li \cite{li-spectrum}, Perron-Frobenius operators can have $L^1$-spectrum equal to the entire closed unit disk; the $BV$-spectrum is significantly more reasonable.} 

Returning to the questions posed above, we note that $P_{\kappa}$ is the infinite-dimensional analogue of the transition matrix $P$ for the Markov chain. If we want to find a ``stationary distribution'' for $T_{\kappa}$, we really are looking for invariant densities, which are eigenvectors of $P_{\kappa}$ with eigenvalue $1$. If all initial densities converge to an invariant density over time, then we have a good idea of where most of the points in $[-1,1]$ end up in the long run: no matter where they started, points will be distributed over $[-1,1]$ according to the invariant density. Moreover, if there is a gap in modulus between an eigenvalue of $1$ and the rest of the spectrum, this gap describes how quickly this convergence occurs, in the same way as described above for Markov chains.

By inspection, if $\kappa = 0$, then from the graph of $T_{\kappa}$ it is clear that $T_{\kappa}$ has \emph{two} invariant densities: the characteristic functions on $[-1,0]$ and $[0,1]$, respectively. If $\kappa > 0$, we can see that these two densities are no longer invariant, because there is mixing between the two intervals $[-1,0]$ and $[0,1]$. It is \emph{a priori} unclear whether or not $T_{\kappa}$ has an invariant density, and if it does whether it has a spectral gap; however, to answer these questions we can study $P_{\kappa}$, as described above.

\section*{Markov Maps and Partitions}

Unfortunately, the fact that $P_{\kappa}$ is not a matrix complicates things; at first glance, we no longer have all of the computational and theoretical tools available to us previously. However, because $T_{\kappa}$ is piecewise-linear, if the map $T_{\kappa}$ has an additional property then we can recover a significant portion of our toolkit.
\begin{definition}
\label{defn:markov}
The map $T_{\kappa}$ is \emph{Markov} when there is a finite collection $\{R_i\}_{i=1}^r$ of disjoint open intervals in $[-1,1]$ such that:
\begin{enumerate}
\item $[-1,1] \setminus \bigcup_i R_i$ is the collection of endpoints of the intervals $\{R_i\}$, and
\item if $R_i$ intersects $T_{\kappa}(R_j)$, then all of $R_i$ is contained in $T_{\kappa}(R_j)$.
\end{enumerate} 
The collection $\{R_i\}$ is called a \emph{Markov partition} for $T_{\kappa}$, even though it is not a partition, strictly speaking.
\end{definition}

The next lemma is a combination of Theorem 9.2.1 in \cite{gora-boyarsky} and Lemma 3.1 in \cite{blank-keller}, stated in the specific case of our paired tent maps $T_{\kappa}$.

\begin{lemma}
\label{lem:markov-op}
Suppose that the paired tent map $T_{\kappa}$ is Markov, with Markov partition $\{R_i\}_{i=1}^r$. If $V = \subspan_{\C}\set{\mathds{1}_{R_i}}{1 \leq i \leq r}$ and $P_{\kappa}$ is the Perron-Frobenius operator for $T_{\kappa}$, then $V$ is $P_{\kappa}$-invariant (considered as a subspace of $BV$). The adjacency matrix for $T_{\kappa}$ is given by the $r$-by-$r$ matrix $A_{\kappa} = \left[ a_{ij} \right]$, where \[ a_{ij} = \begin{cases}
1, & R_i^o \subset T(R_j), \\
0, & \text{otherwise}.
\end{cases} \] Define an isomorphism $\phi_r : V \to \C^r$ by $\phi(\mathds{1}_{R_i}) = e_i$. Then the restriction of $P_{\kappa}$ to $V$ can be represented by the $r$-by-$r$ matrix $M_{\kappa} = (2(1+\kappa))^{-1}A_{\kappa}$, with $\phi_r\circ P_{\kappa} = M_{\kappa}\circ \phi_r$. Moreover, \[ \sigma(P_{\kappa})\setminus \overline{B(0,(2(1+\kappa))^{-1})} = 
(2(1+\kappa))^{-1}\sigma(A_{\kappa})\setminus \overline{B(0,(2(1+\kappa))^{-1})}. \] 
\end{lemma}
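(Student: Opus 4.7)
The plan is first to compute $P_\kappa \mathds{1}_{R_j}$ directly from the transfer-operator formula, then split $BV$ into the finite-dimensional space $V$ and a natural complement $W$, and finally invoke a Lasota--Yorke-type estimate on $W$ to confine its spectrum to the small disk.

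For the computation, each linear branch of $T_\kappa$ has derivative of absolute value $2(1+\kappa)$, so the change-of-variables formula for the Perron--Frobenius operator reduces to
$$ P_\kappa f(x) = \frac{1}{2(1+\kappa)} \sum_{y \in T_\kappa^{-1}(x)} f(y). $$
The turning points $\pm 1/2$ and $0$ lie in $[-1,1] \setminus \bigcup_i R_i$, so condition~(1) of Definition~\ref{defn:markov} forces them to be endpoints of the partition; consequently each $R_j$ sits inside a single monotonicity branch and $T_\kappa|_{R_j}$ is a linear bijection onto $T_\kappa(R_j)$. For $x \in R_i$, the Markov condition~(2) then shows that $T_\kappa^{-1}(x) \cap R_j$ has exactly $a_{ij}$ elements, giving
$$ P_\kappa \mathds{1}_{R_j} = \frac{1}{2(1+\kappa)} \sum_{i=1}^r a_{ij} \mathds{1}_{R_i}. $$
This is simultaneously the $P_\kappa$-invariance of $V$ and the identity $\phi_r \circ P_\kappa = M_\kappa \circ \phi_r$ on $V$.

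For the spectral identity, I would introduce $W = \{f \in BV : \int_{R_i} f\, d\lambda = 0 \text{ for all } i\}$, which is closed and complementary to $V$ in $BV$ via the bounded projection $\Pi f = \sum_i |R_i|^{-1} \bigl(\int_{R_i} f\, d\lambda\bigr)\, \mathds{1}_{R_i}$. The Markov property yields $T_\kappa^{-1}(R_i) = \bigcup_{j : a_{ij}=1} R_j$ up to endpoints, and therefore $\int_{R_i} P_\kappa f\, d\lambda = \sum_j a_{ij} \int_{R_j} f\, d\lambda$, so $W$ is $P_\kappa$-invariant. Since $BV = V \oplus W$ is a topological direct sum with both summands invariant, $\sigma(P_\kappa) = \sigma(M_\kappa) \cup \sigma(P_\kappa|_W)$, and the lemma reduces to showing $\sigma(P_\kappa|_W) \subset \overline{B(0, (2(1+\kappa))^{-1})}$.

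This last containment is the main obstacle; it is essentially the content of Lemma~3.1 of~\cite{blank-keller} combined with the variation estimates for piecewise-linear Markov maps from~\cite{gora-boyarsky}. Heuristically, iterating $P_\kappa$ on a mean-zero function contracts variation at rate $(2(1+\kappa))^{-1}$: the refining partitions $\bigvee_{k=0}^{n-1} T_\kappa^{-k}\{R_i\}$ force $P_\kappa^n f$ to be increasingly averaged out, and the cell-wise mean-zero hypothesis eliminates the usual additive $L^1$-error term in the Lasota--Yorke inequality. Applying the spectral radius formula to the resulting geometric bound yields the needed spectral radius estimate for $P_\kappa|_W$, and combining with the decomposition above delivers the displayed spectral equality.
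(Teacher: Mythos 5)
The paper does not actually prove this lemma; it cites it as a specialization of Theorem~9.2.1 of G\'ora--Boyarsky and Lemma~3.1 of Blank--Keller. So your proposal should be judged on its own merits rather than against a paper proof. Your first part is correct: the transfer-operator formula, the observation that each $R_j$ lies in a single branch, and the Markov condition do give $P_\kappa \mathds{1}_{R_j} = (2(1+\kappa))^{-1}\sum_i a_{ij}\mathds{1}_{R_i}$, hence $V$-invariance and $\phi_r \circ P_\kappa = M_\kappa\circ\phi_r$.

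The second part has a genuine gap: your $W = \{f : \int_{R_i} f\,d\lambda = 0 \text{ for all } i\}$ is \emph{not} $P_\kappa$-invariant. You assert $T_\kappa^{-1}(R_i) = \bigcup_{j:a_{ij}=1} R_j$ up to endpoints, but the Markov property gives $T_\kappa(R_j) = \bigcup_{i:a_{ij}=1} R_i$, which is the forward statement; pulling back, $T_\kappa^{-1}(R_i)\cap R_j = (T_\kappa|_{R_j})^{-1}(R_i)$ is a \emph{proper} subinterval of $R_j$ of length $|R_i|/(2(1+\kappa))$ whenever $T_\kappa(R_j)\supsetneq R_i$, which happens for most pairs $(i,j)$ here (the columns of $A_n$ typically contain several ones). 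A quick measure count shows the two sets have different Lebesgue measures, so they cannot agree. Consequently the duality gives $\int_{R_i}P_\kappa f\,d\lambda = \sum_{j:a_{ij}=1}\int_{(T_\kappa|_{R_j})^{-1}(R_i)}f\,d\lambda$, which need not vanish for $f\in W$. Your topological direct sum of two $P_\kappa$-invariant subspaces therefore does not exist, and the reduction $\sigma(P_\kappa) = \sigma(M_\kappa)\cup\sigma(P_\kappa|_W)$ is unavailable. The closing heuristic --- that the cell-wise mean-zero hypothesis ``eliminates'' the additive $L^1$ term in the Lasota--Yorke inequality --- is also unsubstantiated; the constant term does not disappear merely because $\int_{R_i}f=0$. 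What actually works (and is what the cited references do) is one-sided: $V$-invariance gives a block upper-triangular structure, the Lasota--Yorke inequality bounds the \emph{essential} spectral radius of $P_\kappa$ on $BV$ by $(2(1+\kappa))^{-1}$ so that everything outside the small disk is an isolated eigenvalue of finite multiplicity, and then one proves separately (this is Blank--Keller's Lemma~3.1) that any eigenvector for such an eigenvalue is piecewise constant on the Markov partition and hence lies in $V$. That last step is the real content of the lemma and is precisely what your argument skips over.
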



We see that when $T_{\kappa}$ is Markov, to find the largest eigenvalues for $P_{\kappa}$ it suffices to look only at the spectrum of the matrix $A_{\kappa}$, for which we have all of our linear algebra tools. In particular, we can look at the spectrum of $A_{\kappa}$ to find the second-largest eigenvalues. So, we ask: when are these maps Markov? A general sufficient condition for piecewise linear maps is given by the following lemma, which says that it is enough for the endpoints of monotonicity intervals to be invariant in finitely many steps. We may then apply the lemma to $T_{\kappa}$ by investigating the images of $\pm 1/2$. Recall that $T(x^+)$ is the limit $\lim_{y\to x^+} T(y)$, and similarly for $T(x^-)$.

\begin{lemma}
\label{lem:markov-points}
Let $T : [a,b] \to [a,b]$ be an onto piecewise linear map, and let $E_0$ be the set of endpoints of the intervals of monotonicity for $T$. For each $i\geq 1$, let $E_i = \set{T(s^{\pm})}{s\in E_{i-1}}$. Suppose that there exists $m$ such that $E_m = E_{m+1}.$ Then $T$ is Markov, with Markov partition $\{R_i\}_{i=1}^R$, where $\{r_i\}_{i=0}^M$ enumerates $E_m$ in an increasing way and $R_i = (r_{i-1},r_i)$.
\end{lemma}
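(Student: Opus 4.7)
The plan is to directly verify the two conditions in Definition \ref{defn:markov} for the candidate partition $\{R_i\}_{i=1}^M$ with $R_i = (r_{i-1}, r_i)$. Condition (1) is immediate: the $R_i$ are pairwise disjoint open intervals, and $[a,b]\setminus\bigcup_i R_i$ is precisely the finite set $\{r_0,\dots,r_M\}=E_m$ of their endpoints.

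For condition (2), the first key observation is that $E_0\subseteq E_m$ (reading the sequence $(E_i)$ cumulatively, so that $E_{i-1}\subseteq E_i$), which means that each $R_j$ sits inside a single monotonicity interval of $T$ and $T|_{R_j}$ is affine. Hence $T(R_j)$ is an open subinterval of $[a,b]$ whose closure has endpoints $T(r_{j-1}^+)$ and $T(r_j^-)$, the one-sided limits taken from within $R_j$. These lie in $E_{m+1}$ by definition, and the hypothesis $E_{m+1}=E_m$ forces them both into $\{r_0,\dots,r_M\}$, so $T(R_j) = (r_k, r_\ell)$ for some indices $0\le k < \ell \le M$.

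Suppose now that $R_i$ meets $T(R_j)=(r_k,r_\ell)$. Since $R_i=(r_{i-1},r_i)$ is an open interval containing no $r_p$ in its interior (the $r_p$'s are consecutive elements of the ordered list), any overlap with another open interval whose endpoints both lie in $\{r_p\}_{p=0}^M$ forces $r_k\le r_{i-1}$ and $r_\ell\ge r_i$; that is, $R_i\subseteq T(R_j)$, which is condition (2).

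The main obstacle, such as it is, lies in the preliminary inclusion $E_0\subseteq E_m$, together with the careful handling of one-sided limits when a monotonicity breakpoint of $T$ coincides with an endpoint of some $R_j$. With those in place, the rest reduces to an order-theoretic check on the real line.
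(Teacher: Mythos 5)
Your proof is correct and follows essentially the same route as the paper's: verify condition (1) directly, then establish condition (2) by noting that each $T(R_j)$ is an interval $(r_k,r_\ell)$ with endpoints in $E_m$, so that disjointness of the $R_i$ forces $R_i\subseteq T(R_j)$ whenever they overlap. The one point you make more explicit than the paper is that one needs $E_0\subseteq E_m$ (i.e.\ the $E_i$ read cumulatively, which is how the lemma is actually used later) so that each $R_j$ lies in a single monotonicity interval and the one-sided images $T(r_{j-1}^+),T(r_j^-)$ are the correct endpoints of $T(R_j)$; the paper assumes this tacitly.
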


\begin{proof}
Let $m$ be the smallest $m$ such that $E_m = E_{m+1}$; let $r_i$ and $R_i$ be defined as in the statement of the lemma. Since the union of the intervals $\{R_i\}$ and their endpoints is the same as the union of the intervals of monotonicity along with those endpoints, $[-1,1]\setminus \bigcup_i R_i$ is the endpoints of the $R_i$. Then, since $E_m = E_{m+1}$, for each $j$ we have $T(R_j) = (r_k,r_l)$ for some $k < l$ depending on $j$. Thus, if $R_i \cap T(R_j) \ne \emptyset$, we must have $k\leq i < l$, since the intervals $R_i$ are disjoint; hence $R_i \subset T(R_j)$. Hence $T$ is Markov. 
\end{proof}

\begin{lemma}
\label{lem:paired-tent-markov}
There exists a decreasing sequence $(\kappa_n)_{n=1}^{\infty} \subset (0,1/2)$ such that $T_{\kappa_n}$ is Markov and $\kappa_n \conv{n\to\infty} 0$. Each $\kappa_n$ satisfies $(2+2\kappa)^{n}\kappa = 1$. The Markov partition for $T_{\kappa_n}$ is, for $n=1$, \[ \Big\{ \left( -1,-\tfrac{1}{2} \right), \left( -\tfrac{1}{2}, -\kappa_1 \right), \left( -\kappa_1, 0 \right), \left( 0, \kappa_1 \right), \left( \kappa_1, \tfrac{1}{2} \right), \left( \tfrac{1}{2}, 1 \right) \Big\} \] and for $n \geq 2$,
\begin{gather*}
\Big\{ \left( -1, T_{\kappa_n}(-\kappa_n) \right) \Big\} \cup \Big\{ \left( T_{\kappa_n}^{i}(-\kappa_n), T_{\kappa_n}^{i+1}(-\kappa_n) \right) \Big\}_{i=1}^{n-2}  \\
\cup\ \Big\{ \left( T_{\kappa_n}^{n-1}(-\kappa_n),-\tfrac{1}{2} \right), \left( -\tfrac{1}{2}, -\kappa_n \right), \left( -\kappa_n, 0 \right), \left( 0, \kappa_n \right), \left( \kappa_n, \tfrac{1}{2} \right), \left( \tfrac{1}{2}, T_{\kappa_n}^{n-1}(\kappa_n) \right) \Big\} \\
\cup\ \Big\{ \left( T_{\kappa_n}^{i+1}(\kappa_n), T_{\kappa_n}^i(\kappa) \right) \Big\}_{i=1}^{n-2} \cup \Big\{ \left( T_{\kappa_n}(\kappa_n), 1 \right) \Big\}.
\end{gather*}
\end{lemma}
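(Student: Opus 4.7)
The plan is to apply Lemma \ref{lem:markov-points} at specific parameters. First, I will locate $\kappa_n$ as the unique root in $(0,1/2)$ of the equation $f_n(\kappa) := (2+2\kappa)^n \kappa = 1$, then verify that at $\kappa = \kappa_n$ the forward orbits of the monotonicity endpoints $E_0 = \{-1, -1/2, 0, 1/2, 1\}$ terminate in finitely many steps at the fixed points $\{-1, 0, 1\}$ of $T_{\kappa_n}$. For the asymptotics: $f_n$ is continuous and strictly increasing on $[0,1/2]$ with $f_n(0) = 0 < 1 < 3^n/2 = f_n(1/2)$, giving a unique $\kappa_n$. The recursion $f_{n+1}(\kappa) = (2+2\kappa) f_n(\kappa) \geq 2 f_n(\kappa)$ yields $f_{n+1}(\kappa_n) \geq 2$, so $\kappa_{n+1} < \kappa_n$, and $\kappa_n \leq 2^{-n} \to 0$.

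By the odd symmetry $T_\kappa(-x) = -T_\kappa(x)$, it suffices to trace the orbit on the positive side. Direct computation from the piecewise formulas gives $T_{\kappa_n}(\pm 1) = \pm 1$, $T_{\kappa_n}(0) = 0$, and $T_{\kappa_n}(\pm 1/2) = \mp \kappa_n$ (either one-sided limit), reducing the problem to the orbits of $\pm \kappa_n$. I claim by induction on $i$ that
\[ T_{\kappa_n}^i(\kappa_n) = 1 - \kappa_n (2+2\kappa_n)^i = 1 - (2+2\kappa_n)^{-(n-i)}, \qquad 1 \leq i \leq n, \]
where the second equality uses the defining relation. The base case $i=1$ follows from $T_\kappa(\kappa) = 1 - 2(1+\kappa)\kappa$ on $(0,1/2]$. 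For $1 \leq i \leq n-1$, the right-hand side shows $T_{\kappa_n}^i(\kappa_n) \in (1/2, 1)$ (since $(2+2\kappa_n)^{n-i} \geq 2+2\kappa_n > 2$ forces $(2+2\kappa_n)^{-(n-i)} < 1/2$), so the linear formula $T_\kappa(x) = 2(1+\kappa)(x-1) + 1$ on $[1/2,1]$ applies and produces the claim at index $i+1$. At $i=n$ the exponent on the right vanishes, giving $T_{\kappa_n}^n(\kappa_n) = 0$, a fixed point. Hence the forward orbit of $E_0$ is finite, and Lemma \ref{lem:markov-points} makes $T_{\kappa_n}$ Markov with partition whose endpoints are $\{\pm 1, \pm 1/2, 0, \pm \kappa_n\} \cup \set{T_{\kappa_n}^i(\pm \kappa_n)}{1 \leq i \leq n-1}$.

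For $n = 1$ the set of orbit intermediates is empty and the six-piece partition listed in the lemma is immediate. For $n \geq 2$, the identity $T_{\kappa_n}^i(\kappa_n) = 1 - (2+2\kappa_n)^{-(n-i)}$ shows that $T_{\kappa_n}^i(\kappa_n)$ is strictly decreasing in $i$, with $T_{\kappa_n}^{n-1}(\kappa_n) = (1+2\kappa_n)/(2+2\kappa_n) > 1/2$ (equivalent to $\kappa_n > 0$) and $T_{\kappa_n}(\kappa_n) < 1$; oddness gives the corresponding increasing sequence on the negative side. Arranging all endpoints in order along $[-1,1]$ then reproduces the list in the lemma. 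The main obstacle is the bookkeeping in the induction: ensuring that each intermediate iterate lies in $(1/2,1)$ so that the correct linear piece of $T_{\kappa_n}$ applies, which is precisely what the defining equation $(2+2\kappa_n)^n \kappa_n = 1$ is designed to enforce.
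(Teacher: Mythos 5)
Your proof is correct and follows essentially the same route as the paper: apply Lemma~\ref{lem:markov-points}, reduce to the orbit of $\pm\kappa$, show by induction that $T_{\kappa_n}^i(\kappa_n) = 1 - (2+2\kappa_n)^{-(n-i)}$ stays in $(1/2,1)$ for $1 \leq i \leq n-1$ and lands at $0$ at step $n$, then read off the partition from the ordered endpoints. The only variation is cosmetic: you establish existence and monotonicity of $\kappa_n$ by the intermediate value theorem applied to $f_n(\kappa) = (2+2\kappa)^n\kappa$ together with the recursion $f_{n+1} = (2+2\kappa)f_n$, whereas the paper solves for $n = h(\kappa) = -\log\kappa/\log(2+2\kappa)$ and uses monotonicity of $h$; these are equivalent elementary arguments.
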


\begin{remark}
\label{rem:markov-part-fig}
The Markov partitions for $T_1$ and $T_4$, are shown in Figures \ref{fig:markov-part-1} and \ref{fig:markov-part-4}. The case $n=1$ is distinct because the branch of the map used for $x=\kappa_n$ is different than the branches used for the further iterates $T_{\kappa_n}^i(\kappa_n)$. In each picture, one may visually confirm that the collection of intervals actually is a Markov partition by checking that the image of each (horizontal) interval stretches vertically over a union of consecutive intervals (at each endpoint, the graph of the map passes through intersection points of horizontal and vertical lines).
\end{remark}

\begin{figure}[tbp]
\centering
\begin{subfigure}{0.475\textwidth}
\centering
\includegraphics[width=\textwidth]{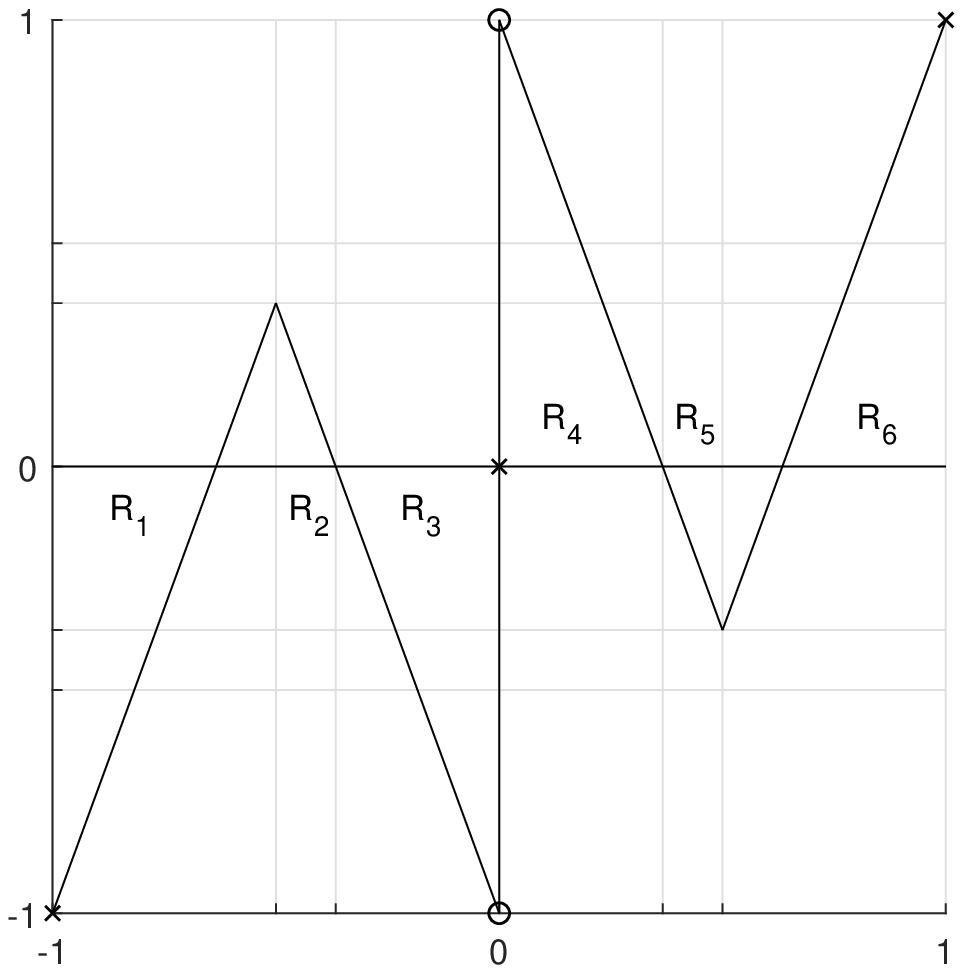}
\caption{$n=1$}
\label{fig:markov-part-1}
\end{subfigure}
\hfill
\begin{subfigure}{0.475\textwidth}
\centering
\includegraphics[width=\textwidth]{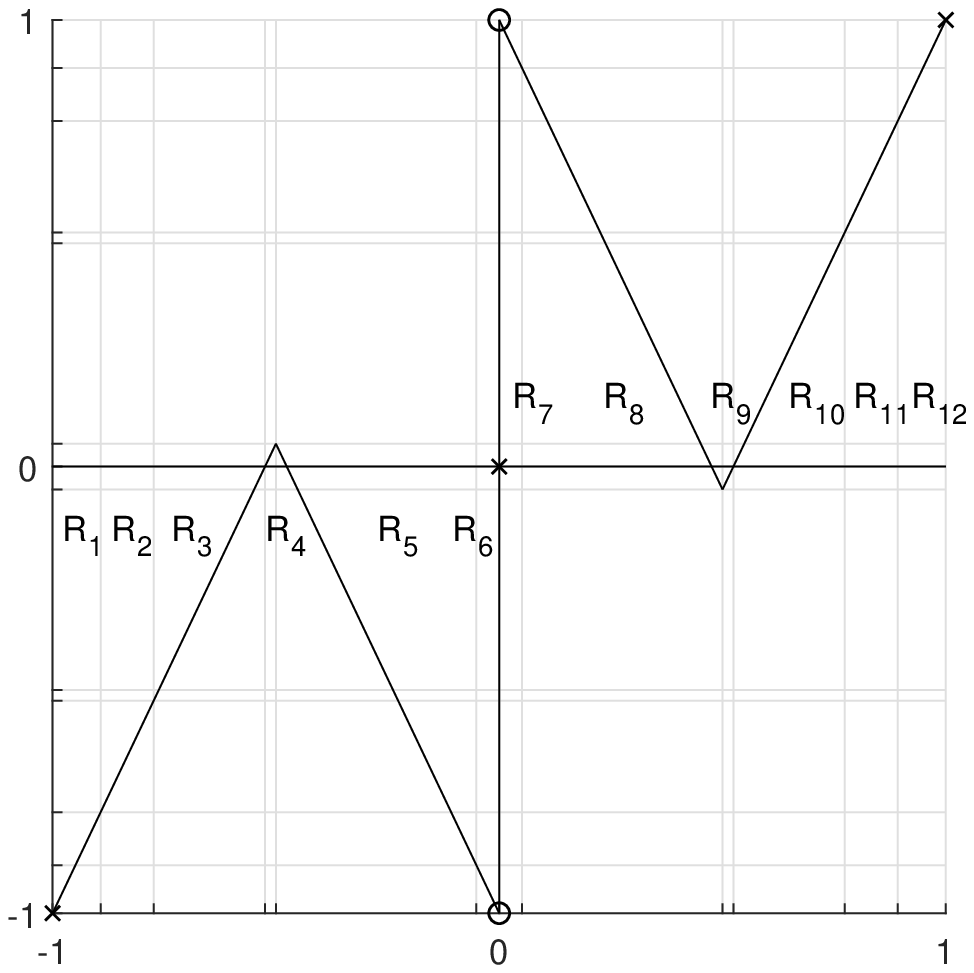}
\caption{$n=4$}
\label{fig:markov-part-4}
\end{subfigure}
\caption{Markov partitions for $T_{\kappa_n}$, with $n=1,4$.}
\label{fig:markov-part}
\end{figure}

\begin{proof}
Consider the paired tent map $T_{\kappa}$ for $\kappa \in (0,1/2)$. We will use Lemma \ref{lem:markov-points} to find conditions on $\kappa$ that make $T_{\kappa}$ Markov. Start with $E_0 = \{-1,-1/2,0,1/2,1\}.$ The map $T_{\kappa}$ is continuous everywhere except at $0$, for which the one-sided limits are $\pm 1$, and we have $T_{\kappa}(-1) = -1$ and $T_{\kappa}(1) = 1$. Then $T_{\kappa}(-1/2) = \kappa$ and $T_{\kappa}(1/2) = -\kappa$, so we consider iterates of $\kappa$ under $T_{\kappa}$; by symmetry, iterates of $-\kappa$ will work similarly. In particular, we will find, for each $n\geq 1$, a $\kappa_n$ such that $T_{\kappa_n}^{i}(\kappa_n) = 1 - (2+2\kappa_n)^{i}\kappa_n > 1/2$ for $1\leq i<n$ and \[ T_{\kappa_n}^n(\kappa_n) = 1 - (2+2\kappa)^n\kappa = 0. \]

First, consider the equation $(2+2\kappa)^n\kappa - 1 = 0$. Rearrange and take logarithms to obtain \[ n = h(\kappa) := \frac{-\log(\kappa)}{\log(2+2\kappa)}.\] The function $h(\kappa)$ is decreasing on $(0,\infty)$, is unbounded as $\kappa$ tends to $0$, and has $h(1/2) = \log(2)/\log(3) < 1$. Thus we conclude that for each $n\in \Z_{\geq 1}$, there exists a unique $\kappa_n \in (0,1/2)$ solving $(2+2\kappa)^n\kappa - 1 = 0$, and $\kappa_n$ decreases to $0$.\footnote{Another way to see this claim is by observing that $(n,\kappa) \mapsto T_{\kappa}^n(1/2)$ is increasing, in both $n$ and $\kappa$ (where this property makes sense).}

Fix $n \geq 2$; we will show that $T_{\kappa_n}^i(\kappa_n) > 1/2$ for each $i = 1,\dots,n-1$. For each of those $i$, we have: \[ (2+2\kappa_n)^{i}\kappa_n = \frac{1}{(2+2\kappa_n)^{n-i}} \leq \frac{1}{2+2\kappa} < \frac{1}{2}. \] Observe that for $i=1$, we have \[ T_{\kappa_n}(\kappa_n) = 1 - (2+2\kappa_n)\kappa_n > \frac{1}{2}. \] By repeated application of $T_{\kappa_n}$ and use of the upper bound on $(2+2\kappa_n)^{i}\kappa_n$, we see that for all $1 \leq n-1$, \[ T_{\kappa_n}^{i}(\kappa_n) = 1 - (2+2\kappa_n)^{i}\kappa_n > \frac{1}{2}. \] We have proven that $T_{\kappa_n}^n(\kappa_n) = 0$, and $T_{\kappa_n}^i(\kappa_n) > 1/2$ for $1 \leq i < n$; the symmetric statement holds for $-\kappa_n$. 

Finally, fix $n\geq 1$. We claim that $E_n$ is where the sequence of $E_i$ terminates. To see this, observe that \[ E_n = E_0 \cup \{T_{\kappa_n}^{i}(\pm 1/2)\}_{i=1}^{n} = \{-1,-1/2,0,1/2,1\} \cup \{T_{\kappa_n}^{i}(\pm \kappa_n)\}_{i=0}^{n-1}, \] and note that we just saw that $T_{\kappa_n}^n(\pm \kappa_n) = 0$, so that $E_{n+1} = E_n$. This shows that $T_{\kappa_n}$ is Markov, using Lemma \ref{lem:markov-points}. The listed Markov partitions are given by tracing $T_{\kappa_n}^i(\pm \kappa_n)$ as $i$ runs from $0$ to $n-1$.
\end{proof}

We now see that $T_n := T_{\kappa_n}$ is Markov for each $n \geq 1$. From the graph of the maps and the form of the Markov partition, it is easy to read off the adjacency matrix $A_n := A_{\kappa_n}$; since the partition has $2n+4$ pieces, the matrix is $(2n+4)$-by-$(2n+4)$. For $n\geq 4$ the general form of the matrix is as in Figure \ref{fig:adj-matrix}. For $n\leq 3$ some of the columns are combined.

\begin{figure}[tbp]
\[ \left[\ \begin{array}{@{}c|c@{}}
\begin{matrix}
1 & 0 & 0 & & 0 & 0 & 0 & 1  \\
1 & 0 & 0 & & 0 & 0 & 1 & 0 \\
0 & 1 & 0 & & 0 & 0 & 1 & 0 \\
0 & 0 & 1 & & 0 & 0 & 1 & 0 \\
& \vdots & & \ddots & & & \vdots & \\
0 & 0 & 0 & & 1 & 0 & 1 & 0 \\
0 & 0 & 0 & & 1 & 0 & 1 & 0 \\
0 & 0 & 0 & & 1 & 0 & 1 & 0
\end{matrix}
&
\begin{matrix}
0 & 0 & 0 & 0 & & 0 & 0 & 0 \\
& & & & & & & \\
& & & & & & & \\
& & & & & & & \\
& & & & \vdots & & & \\
& & & & \hphantom{\ddots} & & & \\
0 & 0 & 0 & 0 & & 0 & 0 & 0 \\
0 & 1 & 1 & 0 & & 0 & 0 & 0
\end{matrix} \\
\hline
\begin{matrix}
0 & 0 & 0 & & 0 & 1 & 1 & 0 \\
0 & 0 & 0 & & 0 & 0 & 0 & 0 \\
& & & & & & &  \\
& & & \vdots & & & & \\
& & & & & & & \\
& & & \hphantom{\ddots} & & & & \\
& & & & & & & \\
0 & 0 & 0 & & 0 & 0 & 0 & 0
\end{matrix}
&
\begin{matrix}
0 & 1 & 0 & 1 & & 0 & 0 & 0 \\
0 & 1 & 0 & 1 & & 0 & 0 & 0 \\
0 & 1 & 0 & 1 & & 0 & 0 & 0 \\
& \vdots & & & \ddots & & \vdots & \\
0 & 1 & 0 & 0 & & 1 & 0 & 0 \\
0 & 1 & 0 & 0 & & 0 & 1 & 0 \\
0 & 1 & 0 & 0 & & 0 & 0 & 1 \\
1 & 0 & 0 & 0 & & 0 & 0 & 1
\end{matrix}
\end{array}\ \right] \]
\captionsetup{labelfont=bf}
\caption{General form of the $(2n+4)$-by-$(2n+4)$ adjacency matrix $A_n$.}
\label{fig:adj-matrix}
\end{figure}

The spectrum of $M_n := M_{\kappa_n}$ is just the spectrum of $A_n$ scaled by $(2(1+\kappa))^{-1}$, so we may focus our analysis on $A_n$. For each $n$, let \[ V_n = \subspan_{\C}\set{\mathds{1}_{R_i}}{ 1 \leq i \leq 2n+4}, \] where $\{R_i\}$ is the Markov partition for $T_n$. 

\section*{Spectral Properties of $A_n$}

Observe that the Markov partition for $T_n$ is symmetric about $0$. Moreover, observe that for any $\kappa$, $T_{\kappa}$ is odd: $T_{\kappa}(-x) = -T_{\kappa}(x)$. In particular, if $\psi(x) = -x$, then $T_n\circ\psi = \psi\circ T_n$. The map $\psi$ has a Perron-Frobenius operator, $P_\psi$, and the commutation relation says that $P_nP_\psi = P_\psi P_n$. We also see that the map $\psi$ is Markov on the same partition as $T_n$, and noting the symmetry of this partition we obtain $\psi(R_i) = R_{2n+5-i}$. Thus the action of $P_\psi$ on $V_n$ is represented by the matrix $J_n$, as shown in Figure \ref{fig:Jn-matrix}, and we have $M_nJ_n = J_nM_n$, so also $A_nJ_n = J_nA_n$. It is also clear that because $\psi^2 = \id$, we have $J_n^2 = I$. Because $J_n^{-1} = J_n$, we have $J_nA_nJ_n = A_n$. Left-multiplication by $J_n$ reverses the order of the rows and right-multiplication by $J_n$ reverses the order of the columns, so the combination of both of them is performing a half-circle rotation of the matrix; we thus have independent verification of the half-circle rotational symmetry of $A_n$, which could be seen from Figure \ref{fig:adj-matrix}.

\begin{figure}
\[ \begin{bmatrix}
0 & 0 & 0 &           & 0 & 0 & 1 \\
0 & 0 & 0 & \dots     & 0 & 1 & 0 \\
0 & 0 & 0 &           & 1 & 0 & 0 \\
  & \vdots & & \iddots & & \vdots & \\
0 & 0 & 1 &           & 0 & 0 & 0 \\
0 & 1 & 0 & \dots     & 0 & 0 & 0 \\
1 & 0 & 0 &           & 0 & 0 & 0
\end{bmatrix} \]
\captionsetup{labelfont=bf}
\caption{The $(2n+4)$-by-$(2n+4)$ matrix $J_n$.}
\label{fig:Jn-matrix}
\end{figure}

Recall (see Theorem 1.3.19 in \cite{horn-johnson}) that if two diagonalizable matrices commute, then they are simultaneously diagonalizable, meaning that there is a shared basis of eigenvectors for the matrices. Also note that if two $r$-by-$r$ matrices $A$ and $B$ commute, then $\C^{2n+4}$ becomes a left-$\C[x,y]$-module, by setting $p(x,y)v := p(A,B)v$ for polynomials $p(x,y) \in \C[x,y]$ and $v\in \C^{2n+4}$. We will now use these facts to find many spectral properties of $A_n$, using its relation with $J_n$; note that we will find the spectral data of the entire sequence of $A_n$ all at once! We use Axler's approach to determinant-free linear algebra \cite{axler} and the practical implementation of those ideas by McWorter and Meyers \cite{no-dets}. Moreover, we make significant use of the underlying map $T_n$ to read off the algebraic relationships satisfied by $A_n$ and $J_n$ without doing a single matrix computation. We therefore reduce much of the study of the Perron-Frobenius operators to matrices that are easily studied by looking directly at the underlying maps.

\begin{lemma}
\label{lem:two-var-min-poly}
We have $A_n(A_n^{n+1}-2A_n^n-2J_n) = 0$.
\end{lemma}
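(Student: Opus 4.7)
The plan is to use the commutation $A_n J_n = J_n A_n$ together with $J_n^2 = I$ to diagonalize $J_n$: the space $\C^{2n+4}$ splits as $V_+ \oplus V_-$, where $V_\pm := \ker(J_n \mp I)$, each of dimension $n+2$ and each $A_n$-invariant. On $V_\pm$, the target identity becomes $A_n(A_n^{n+1} - 2A_n^n \mp 2I) = 0$, and it suffices to verify each of these on the symmetrized basis $f_i^\pm := e_i \pm e_{2n+5-i}$ for $i = 1, \ldots, n+2$.

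The first step is to read off $A_n f_i^\pm$ directly from Lemma \ref{lem:markov-op} and the table of $T_n(R_j)$ that can be extracted from the Markov partition in Lemma \ref{lem:paired-tent-markov}. For most $i$ this gives a simple shift $A_n f_i^\pm = f_{i+1}^\pm$, while the exceptional indices $i \in \{1, n-1, n, n+1, n+2\}$ branch; in particular, $A_n f_{n+2}^\pm = f_1^\pm$ and $A_n f_n^\pm = \pm f_{n+2}^\pm$, where the sign encodes the $V_\pm$ split and is ultimately the origin of the $\mp 2I$ in the identity.

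Next I would iterate these formulas to compute $A_n^k f_i^\pm$ up to $k = n+2$. It is cleanest to begin with $f_{n+2}^\pm$: the shift pattern builds up as $A_n^k f_{n+2}^\pm = f_1^\pm + f_2^\pm + \cdots + f_k^\pm$ for $k \leq n-1$, and at $k = n$ the orbit ``saturates'' to $\sum_{j=1}^{n+2} f_j^\pm$. Under the correspondence $e_i \leftrightarrow \mathds{1}_{R_i}$, the dynamical content is that $T_n^{n-1}(R_1) = (-1,0)$ and that $T_n$ folds $(-1,0)$ twice onto $(-1,\kappa_n)$, giving $2(1+\kappa_n) P_n \mathds{1}_{(-1,0)} = 2\mathds{1}_{(-1,\kappa_n)}$; this doubling is the geometric source of the coefficient $2$ in the claim. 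For the remaining $f_i^\pm$, one uses $A_n^k f_i^\pm = A_n^{k-1}(A_n f_i^\pm)$ to reduce to the $f_{n+2}^\pm$ computation plus a few low-order boundary corrections.

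The main obstacle will be the two cases $i = n$ and $i = n+1$: for these, the polynomial $A_n^{n+1} - 2A_n^n \mp 2I$ does \emph{not} annihilate $f_i^\pm$ on its own, and only after multiplying by $A_n$ do the contributions cancel. Verifying this last step is a direct but somewhat delicate bookkeeping computation using the branching formulas for $A_n f_n^\pm$ and $A_n f_{n+1}^\pm$ (which distribute weight across most of the basis); once this is handled, together with a separate check for the small cases $n = 1, 2$ where the Markov partition has a slightly different structure, the lemma follows.
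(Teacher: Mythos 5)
Your proposal takes a genuinely different route from the paper's. The paper singles out the vector $e_{n+2}$, reads off $(A_n^{n+1}-2A_n^n-2J_n)e_{n+2}=0$ directly from the geometry of $T_n$ on $(-\kappa_n,0)$, propagates this to the $\C[A_n,J_n]$-cyclic subspace of $e_{n+2}$ by commutativity, and then observes that this subspace has codimension two with the complementary directions sitting inside $\ker A_n$; no column of $A_n$ other than the ones touched by the orbit of $e_{n+2}$ is ever needed. You instead split $\C^{2n+4}=E^+\oplus E^-$ by $J_n$-eigenspace (essentially Lemma~\ref{lem:Jn-diag} up front) and plan a basis-by-basis check on the symmetrized vectors $f_i^\pm = e_i \pm e_{2n+5-i}$. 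Your key formulas $A_nf_{n+2}^\pm=f_1^\pm$, $A_nf_n^\pm=\pm f_{n+2}^\pm$, $A_nf_1^\pm=f_1^\pm+f_2^\pm$, $A_nf_{n-1}^\pm=f_n^\pm+f_{n+1}^\pm+f_{n+2}^\pm$, and the shift on $2\le i\le n-2$ all check out against the Markov partition, and the $E^\pm$ organization is a nice way to see where the $\mp 2I$ comes from. The place where you understate the difficulty is $f_{n+1}^\pm$: one computes $A_n f_{n+1}^+ = f_2^+ + \cdots + f_{n+1}^+ + 2f_{n+2}^+$ and $A_n f_{n+1}^- = f_2^- + \cdots + f_{n+1}^-$, so $A_n f_{n+1}^\pm$ feeds $f_{n+1}^\pm$ back into itself; iterating gives a genuine linear recursion whose coefficients grow with $k$, not a ``low-order boundary correction,'' and the same term already intrudes when you push $A_n^n f_{n+2}^\pm=\sum_j f_j^\pm$ one more step. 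The clean way out is exactly the paper's structural observation: $f_n^\pm+f_{n+1}^\pm$ lies in the $A_n$-orbit span of $f_{n+2}^\pm$, while the complementary line $u^\pm = f_1^\pm+\cdots+f_n^\pm - f_{n+1}^\pm - f_{n+2}^\pm$ (the symmetrization of $v_1 = e_1+\cdots+e_n-e_{n+1}-e_{n+2}$) lies in $\ker A_n$; so $A_n h_n(A_n,\pm I)$ annihilates everything once it annihilates $f_{n+2}^\pm$ and $u^\pm$. Adding that step makes your verification close, but at that point your proof has converged with the paper's---you buy the conceptual clarity of the eigenspace split at the cost of duplicating the cyclic-vector-plus-kernel argument within each half.
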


\begin{proof}
Zooming in on the interval $[-1/2,0]$, as in Figure \ref{fig:markov-zoom}, we can identify the intervals $R_{n-1}$ through $R_{n+2}$. The interval $R_{n-1}$ is the interval immediately to the left of the left zero of $T_n$ in $[-1,0]$; the interval $R_n$ is the left branch of the leaking from $[-1,0]$ to $[0,1]$; the interval $R_{n+1}$ is the large interval $(-1/2,-\kappa_n)$; and the interval $R_{n+2}$ is the interval $(-\kappa_n,0)$.

\begin{figure}[tbp]
\centering
\includegraphics[width=0.75\textwidth]{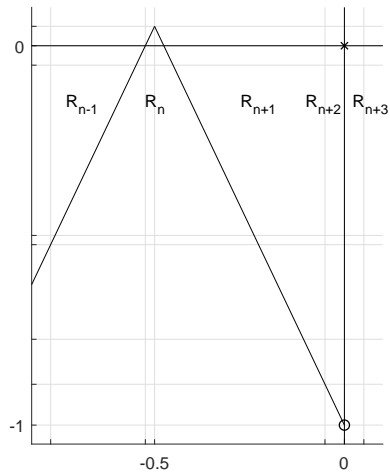}
\caption{A zoomed-in look at the Markov partition for $T_n$ in $[-1,0]$.} 
\label{fig:markov-zoom}
\end{figure}

Looking at the map $T_n$ and using the Markov partition, we see that for $i \leq n-1$, the interval $R_{n+2}$ is mapped to $R_1$ and is subsequently expanded to the interval $(-1,r_i)$ in $i$ total steps, which is represented by \[ A_n^i e_{n+2} = e_1 + \dots + e_i. \] In the case of $i=n-1$, we have $T_n^{n-1}(R_{n+2}) = (-1,r_{n-1})$, and because $r_{n-1}$ is the left zero for $T_n$ in $[-1,0]$, we have $T_n^n(R_{n+2}) = (-1,0)$, which is represented by \[ A_n^n(e_{n+2}) = e_1 + \dots + e_{n+2}. \] Then, we clearly have $T_n(-1,0) = (-1,\kappa_n)$, so that because $T_n$ is (except at $-1/2$) $2$-to-$1$ on $[-1,0]$, we have \[ A_n^{n+1} e_{n+2} = 2(e_1 + \dots e_{n+3}) = 2A_n^n e_{n+2} + 2J_ne_{n+2}, \] where we used $J_ne_i = e_{2n+5-i}$. We rearrange this to $(A_n^{n+1}-2A_n^n-2J_n)e_{n+2} = 0$. Because $A_n$ and $J_n$ commute, we see that for any polynomial $p \in \C[x,y]$, we have \[ (A_n^{n+1} - 2A_n^n - 2J_n)p(A_n,J_n)e_{n+2} = 0. \]

Now, by the equations for $A_n^i e_{n+2}$, the fact that $J_n e_{n+2} = e_{n+3}$, and the fact that $A_n$ and $J_n$ commute, we see that 
\begin{align*} 
\subspan_{\C} &\set{p(A_n,J_n)e_{n+2}}{p\in \C[x,y]} \\
& = \subspan_{\C}\left\{ e_1,\dots,e_{n-1},(e_n+e_{n+1}),e_{n+2},\right. \\
& \qquad \qquad \qquad \left. e_{n+3},(e_{n+4}+e_{n+5}),e_{n+6},\dots,e_{2n+4}\right\}.
\end{align*} 
We can see that acting on the vector $e_{n+2}$ by $A_n$ and $J_n$ does not separate $e_n$ and $e_{n+1}$, or $e_{n+4}$ and $e_{n+5}$, by observing that any image of an $R_i$ either does not intersect $R_n$ and $R_{n+1}$ or covers both (and similarly for $R_{n+4}$ and $R_{n+5}$). Moreover, this subspace does not contain the vectors $v_1 = e_1 + \dots + e_n - (e_{n+1} + e_{n+2})$ and $v_2 = J_nv_1$. These are two linearly independent vectors that both lie in the kernel of $A_n$; to see they lie in the kernel, observe that the two vectors are representing $\mathds{1}_{(-1,-1/2)} - \mathds{1}_{(-1/2,0)}$ and the reflection $\mathds{1}_{(1/2,1)} - \mathds{1}_{(0,1/2)}$, and the intervals stretch to the same image. All together, we now have a basis for $\C^{2n+4}$, every element of which is annihilated by $A_n(A_n^{n+1}-2A_n^n-2J_n)$, and hence $A_n(A_n^{n+1}-2A_n^n-2J_n) = 0$.
\end{proof}

Let $E^+$ and $E^-$ be the subspaces of symmetric and antisymmetric vectors in $\C^{2n+4}$, respectively. 

\begin{lemma}
\label{lem:Jn-diag}
For all $n\geq 1$, $J_n$ is diagonalizable, with eigenspace $E^+$ corresponding to the eigenvalue $1$ and eigenspace $E^-$ corresponding to the eigenvalue $-1$. Moreover, the eigenspaces $E^{\pm}$ are $A_n$-invariant. 
\end{lemma}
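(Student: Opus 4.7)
The plan is to leverage two facts already established in the text: $J_n^2 = I$ and $A_n J_n = J_n A_n$. These essentially trivialize the lemma, so the write-up will mostly be about applying standard linear algebra facts correctly.

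First I would invoke the identity $J_n^2 = I$ (which the paragraph preceding the lemma derives from $\psi^2 = \id$). This means $J_n$ is annihilated by the polynomial $x^2 - 1 = (x-1)(x+1)$, which has two distinct roots. In Axler's determinant-free framework (see the cited reference), a linear operator on a finite-dimensional space is diagonalizable precisely when it is annihilated by a polynomial that splits into distinct linear factors, so $J_n$ is diagonalizable with $\sigma(J_n) \subseteq \{+1, -1\}$.

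Next I would identify the eigenspaces. Since $J_n e_i = e_{2n+5-i}$, a vector $v = \sum_i c_i e_i$ satisfies $J_n v = v$ if and only if $c_i = c_{2n+5-i}$ for all $i$, which is precisely the condition $v \in E^+$. Similarly, $J_n v = -v$ if and only if $c_i = -c_{2n+5-i}$, i.e., $v \in E^-$. Thus $E^+$ and $E^-$ are contained in the $+1$- and $-1$-eigenspaces, respectively. Since $\C^{2n+4} = E^+ \oplus E^-$ (decomposing any $v$ as $\tfrac{1}{2}(v + J_n v) + \tfrac{1}{2}(v - J_n v)$) and the two eigenspaces of a diagonalizable operator must be linearly independent, these containments are equalities.

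Finally, for $A_n$-invariance of $E^{\pm}$, I would use the commutation relation $A_n J_n = J_n A_n$ established earlier: if $v \in E^+$, then $J_n(A_n v) = A_n(J_n v) = A_n v$, so $A_n v \in E^+$; similarly $A_n$ preserves $E^-$. There is no real obstacle here — the only subtlety is making sure to cite the diagonalizability criterion from distinct-root annihilating polynomials rather than appealing to any determinant-based spectral decomposition, which fits the paper's stated philosophy.
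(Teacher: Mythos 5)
Your proposal is correct and follows essentially the same route as the paper: diagonalizability from the separable annihilating polynomial $x^2-1$, identification of the eigenspaces with $E^{\pm}$, and $A_n$-invariance from commutativity. The only cosmetic difference is that you identify the eigenspaces by a direct coordinate check ($c_i = \pm c_{2n+5-i}$) together with the decomposition $v = \tfrac12(v+J_nv) + \tfrac12(v-J_nv)$, whereas the paper reads them off from the spectral projections $\tfrac12(I \pm J_n)$ — these are really the same computation viewed two ways.
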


\begin{proof}
We have $J_n^2 = I$, so that $(J_n-I)(J_n+I) = 0$. Since $J_n\pm I \ne 0$, we see that the minimal polynomial of $J_n$ is \[ m_{J_n}(x) = x^2-1 = (x-1)(x+1), \] and so $J_n$ is diagonalizable (because the minimal polynomial is separable), with eigenvalues $\pm 1$. The projections onto the eigenspaces $E_{+1}$ and $E_{-1}$ are given by \[ \frac{J_n+I}{1+1} = \tfrac{1}{2}(I+J_n), \qquad \frac{J_n-I}{-1-1} = \tfrac{1}{2}(I-J_n), \] respectively, by normalizing the factor of the minimal polynomial that does not annihilate the appropriate space. This immediately shows that $E_{+1} = E^+$ and $E_{-1} = E^-$. Finally, $A_n$ and $J_n$ commute, so for $s\in E^+$ and $a\in E^-$ we have \[ J_nA_ns = A_nJ_ns = A_ns, \qquad J_nA_na = A_nJ_na = -A_na, \] thus showing that $E^{\pm}$ are $A_n$-invariant.
\end{proof}

For notation, for all $n\geq 1$ let $f_n(x) = x^n(x-2) - 2$, $g_n(x) = x^n(x-2) + 2$, and $h_n(x,y) = x^n(x-2)-2y$. 

\begin{lemma}
\label{lem:poly-properties}
The polynomials $f_n$ and $g_n$ are irreducible over $\Q[x]$, separable with no roots at zero, and do not share any roots. 
\end{lemma}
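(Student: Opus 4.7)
The plan is to handle the four claims in an order that lets each one feed into the next, leveraging Eisenstein's criterion as the main tool.

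The first step is to observe that, upon expansion,
\[
f_n(x) = x^{n+1} - 2x^n - 2, \qquad g_n(x) = x^{n+1} - 2x^n + 2,
\]
and these are both monic polynomials of degree $n+1$ whose coefficients of $x^{n-1}, x^{n-2}, \ldots, x^1$ all vanish. I would apply Eisenstein's criterion at the prime $p=2$: the leading coefficient $1$ is not divisible by $2$, every other coefficient is $0$ or $\pm 2$ and hence divisible by $2$, and the constant term $\mp 2$ is not divisible by $4$. So both $f_n$ and $g_n$ are irreducible over $\Q[x]$.

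Once irreducibility is in hand, separability is essentially free: any irreducible polynomial over a field of characteristic zero is automatically separable (it is coprime to its nonzero derivative, which has strictly smaller degree, by irreducibility). This avoids having to grind through $\gcd(f_n,f_n')$ or $\gcd(g_n,g_n')$ computations directly.

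The remaining two claims are immediate. For roots at zero, $f_n(0) = -2$ and $g_n(0) = 2$, both nonzero, so neither has $0$ as a root. For shared roots, if $\alpha$ were a common root then
\[
0 = g_n(\alpha) - f_n(\alpha) = 4,
\]
a contradiction, so $f_n$ and $g_n$ share no roots.

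There really is no substantial obstacle here; the only point of strategy is to resist the temptation to attack separability or non-vanishing of the discriminants directly (which would require handling the critical point $x = 2n/(n+1)$ and an unpleasant case analysis), and instead to let the Eisenstein argument do all of the heavy lifting.
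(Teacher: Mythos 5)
Your proof is correct and follows essentially the same route as the paper's: Eisenstein at $p=2$ for irreducibility, irreducibility plus characteristic zero for separability, direct evaluation at $0$, and the identity $g_n - f_n = 4$ to rule out shared roots. The only cosmetic difference is that the paper also cites Gauss's Lemma to pass from $\Z[x]$-irreducibility to $\Q[x]$-irreducibility; whether that step needs to be mentioned depends only on which form of Eisenstein's criterion one takes as given, since $f_n$ and $g_n$ are monic and hence primitive.
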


\begin{proof}
For irreducibility, apply Eisenstein's Criterion with $p=2$ in both cases, followed by Gauss's Lemma. Since $\Q$ is characteristic zero, $f_n$ and $g_n$ are both separable. Clearly $0$ is not a root of either polynomial, and since $f_n(x) = g_n(x)-4$, the two polynomials cannot share any roots.
\end{proof}

\begin{proposition}
\label{prop:adj-spec-theory}
Let $n\geq 1$. We have:
\begin{enumerate}
\item the kernel of $A_n$ is $\ker(A_n) = \subspan_{\C}\{v_1+J_nv_1\} \oplus \subspan_{\C}\{v_1-J_nv_1\}$, for $v_1 = e_1 + \dots + e_n - (e_{n+1} + e_{n+2})$;
\item for $s\in E^+$, $h(A_n,J_n)s = f_n(A_n)s$, and the minimal polynomial of $A_n$ restricted to $E^+$ is $xf_n(x)$;
\item for $a\in E^-$, $h(A_n,J_n)a = g_n(A_n)a$, and the minimal polynomial of $A_n$ restricted to $E^-$ is $xg_n(x)$;
\item the minimal polynomial of $A_n$ is \[ m_{A_n}(x) = xf_n(x)g_n(x) = x(x^{2n+2} - 4x^{2n+1} + 4x^{2n} - 4); \]
\item the characteristic polynomial of $A_n$ is $\chi_{A_n}(x) = x m_{A_n}(x)$;
\item $A_n$ is diagonalizable over $\C$, with all eigenvectors corresponding to roots of $f_n$ being symmetric and all eigenvectors corresponding to roots of $g_n$ being antisymmetric.
\end{enumerate}
\end{proposition}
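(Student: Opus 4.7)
The plan is to exploit the identity $A_n \cdot h_n(A_n, J_n) = 0$ from Lemma \ref{lem:two-var-min-poly} by restricting it to the $A_n$-invariant subspaces $E^{\pm}$ from Lemma \ref{lem:Jn-diag}. Since $J_n$ acts as $+1$ on $E^+$ and as $-1$ on $E^-$, the bivariate polynomial $h_n(x,y) = x^n(x-2) - 2y$ collapses to $f_n(x)$ on $E^+$ and to $g_n(x)$ on $E^-$, so parts (2) and (3) are immediate at the level of divisibility: the minimal polynomial of $A_n|_{E^+}$ divides $xf_n(x)$ and that of $A_n|_{E^-}$ divides $xg_n(x)$.

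To upgrade divisibility to equality, I will use the irreducibility of $f_n, g_n$ from Lemma \ref{lem:poly-properties} to cut the list of candidates on $E^+$ down to $\{1, x, f_n(x), xf_n(x)\}$ (and analogously on $E^-$). The constant candidate is trivially excluded. To rule out $x$ alone on $E^+$, I will exhibit a symmetric vector that $A_n$ does not annihilate: $s = e_{n+2} + e_{n+3}$ is symmetric by $J_n e_{n+2} = e_{n+3}$, and by the Markov dynamics identified in the proof of Lemma \ref{lem:two-var-min-poly} we have $A_n e_{n+2} = e_1$, so $A_n s = e_1 + e_{2n+4} \neq 0$. To rule out $f_n$ alone, I will use that $f_n(0) = -2 \neq 0$ would force $A_n|_{E^+}$ to be invertible, contradicting the nonzero vector $v_1 + J_n v_1 \in E^+ \cap \ker A_n$ produced from the linearly independent kernel vectors $v_1, J_n v_1$ of Lemma \ref{lem:two-var-min-poly}. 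The same strategy, with $e_{n+2} - e_{n+3}$ and $v_1 - J_n v_1$, settles the antisymmetric case and completes parts (2) and (3).

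For part (4), since $\C^{2n+4} = E^+ \oplus E^-$ with both summands $A_n$-invariant, $m_{A_n}$ is the least common multiple of $xf_n(x)$ and $xg_n(x)$; by Lemma \ref{lem:poly-properties}, $f_n$ and $g_n$ are coprime with nonzero constant term, so the LCM equals $xf_n(x)g_n(x)$, and the stated expansion follows from $f_n(x)g_n(x) = (x^n(x-2))^2 - 4$. Part (6) then follows at once: $m_{A_n}$ is a product of three pairwise coprime separable factors, hence itself separable, so $A_n$ is diagonalizable. For the symmetry assertion in (6), if $A_n v = \lambda v$ with $f_n(\lambda) = 0$, I decompose $v = s + a$ with $s \in E^+, a \in E^-$; the $A_n$-invariance of the summands gives $A_n a = \lambda a$, and applying $xg_n$ (the minimal polynomial on $E^-$) yields $\lambda g_n(\lambda) a = 0$. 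Since $\lambda \neq 0$ (as $f_n(0) \neq 0$) and $g_n(\lambda) \neq 0$ (no shared roots), $a = 0$; the converse claim for roots of $g_n$ is identical.

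For part (5), diagonalizability together with $\dim E^{\pm} = n+2 = \deg(xf_n) = \deg(xg_n)$ forces every root of $xf_n$ to have a one-dimensional eigenspace inside $E^+$, and similarly for $xg_n$ inside $E^-$; the eigenvalue $0$ therefore has geometric (equals algebraic) multiplicity $2$ overall, and collecting multiplicities yields $\chi_{A_n}(x) = x^2 f_n(x) g_n(x) = x\,m_{A_n}(x)$. This finally gives $\dim \ker A_n = 2$, and since $v_1 + J_n v_1 \in E^+$ and $v_1 - J_n v_1 \in E^-$ are both nonzero and lie in complementary subspaces, they span $\ker A_n$, closing part (1). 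The main obstacle is navigating the partial circularity: the equality in (1) requires the kernel dimension from (5), which itself uses the diagonalizability of (6), which rests on the equalities established in (2) and (3). The resolution is the order above — first establish the minimal polynomial identifications using only the weaker information that $\ker A_n$ contains $\subspan_{\C}\{v_1 + J_n v_1, v_1 - J_n v_1\}$, then bootstrap to dimensions, and only then close (1).
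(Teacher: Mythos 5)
Your proof is correct and follows the same strategy as the paper — restrict $A_n h_n(A_n, J_n) = 0$ to the $J_n$-eigenspaces $E^{\pm}$, pin down the restricted minimal polynomials as $xf_n$ and $xg_n$ using irreducibility and the known kernel vectors, take their least common multiple, and invoke separability for diagonalizability together with a dimension count for the characteristic polynomial and the kernel — and your ordering, which defers the exact dimension of $\ker A_n$ until after diagonalizability is established, is if anything a touch more careful than the paper's own, which opens by citing the proof of Lemma~\ref{lem:two-var-min-poly} for the kernel \emph{basis} even though that proof only exhibits the two vectors as linearly independent elements of the kernel. The only small slip is the claim $A_n e_{n+2} = e_1$, which fails for $n = 1$ (there $T_1(R_3) = (-1,0)$, so $A_1 e_3 = e_1 + e_2 + e_3$); the needed non-vanishing $A_n(e_{n+2} + e_{n+3}) \neq 0$ nonetheless holds for all $n$, and the paper's phrasing $A_n(e_{n+2} + J_n e_{n+2}) \neq 0$ quietly sidesteps this case.
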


\begin{proof}
First, we have already seen (in the proof of Lemma \ref{lem:two-var-min-poly}) that $v_1$ and $J_nv_1$ form a basis for the kernel of $A_n$, so $v_1+J_nv_1$ and $v_1-J_nv_1$ also form a basis of the kernel of $A_n$ (one that conveniently splits into a symmetric and antisymmetric part).

Observe that $J_n$ restricted to $E^{\pm}$ is $\pm I$. Thus, we have, for $s\in E^+$ and $a\in E^-$: 
\begin{gather*}
h_n(A_n,J_n)s = h_n(A_n,I)s = (A_n^{n+1} - 2A_n^n - 2I)s = f_n(A_n)s, \\
h_n(A_n,J_n)a = h_n(A_n,-I)a = (A_n^{n+1} - 2A_n^n + 2I)a = g_n(A_n)a. 
\end{gather*}
Thus the minimal polynomial for $A_n$ restricted to $E^+$ and to $E^-$ are factors of $xf_n(x)$ and $xg_n(x)$, respectively, because $A_nh_n(A_n,J_n) = 0$. Then, since 
\begin{gather*}
A_n(e_{n+2} \pm J_ne_{n+2}) \ne 0, \\
f_n(A_n)(v_1+J_nv_1) \ne 0, \qquad g_n(A_n)(v_1-J_nv_1) \ne 0,
\end{gather*}
we see that the minimal polynomials for $A_n$ restricted to $E^+$ and $E^-$ must be exactly equal to $xf_n(x)$ and $xg_n(x)$. Then the minimal polynomial for $A_n$ on $\C^{2n+4} = E^+ \oplus E^-$ is the lowest common multiple of the minimal polynomials for $A_n$ on each subspace $E^{\pm}$, which means that $m_{A_n}(x) = xf_n(x)g_n(x)$ (as $f_n$ and $g_n$ share no roots). The degree of $m_{A_n}(x)$ is $2n+3$, but we know that the kernel is two-dimensional, so the characteristic polynomial must be $\chi_{A_n}(x) = xm_{A_n}(x)$, since the degree of $\chi_{A_n}(x)$ is exactly $2n+4$.

Lastly, the minimal polynomial is separable, by Lemma \ref{lem:poly-properties}, so we see that $A_n$ is diagonalizable. Since $A_n$ and $J_n$ commute, there is a basis of shared eigenvectors for $A_n$ and $J_n$. If $v$ is a non-kernel eigenvector for $A_n$, then as an eigenvector for $J_n$ it is either an element of $E^+$ or $E^-$; when $A_nv = \lambda v$ for $\lambda$ a root of $f_n$, then $v\in E^+$ since the minimal polynomial for $E^+$ is $xf_n(x)$ and $\lambda$ is not a root of $g_n$. Similarly, an eigenvector corresponding to a root of $g_n$ is an element of $E^-$. The proof is complete.
\end{proof}

We see that $A_n$ has zero as an eigenvalue with multiplicity two, and the non-zero eigenvalues of $A_n$ are the roots of the polynomials $f_n$ and $g_n$. We will now show that for $n\geq 5$, both $f_n$ and $g_n$ have a real root near $2$, one larger and one smaller respectively, and that all of the other roots are found near the unit circle. Figure \ref{fig:roots-fn-gn} shows the roots of $m_{A_n}$ for four different $n$.

\begin{figure}[tbp]
\centering
\begin{subfigure}{0.475\textwidth}
\centering
\includegraphics[width=\textwidth]{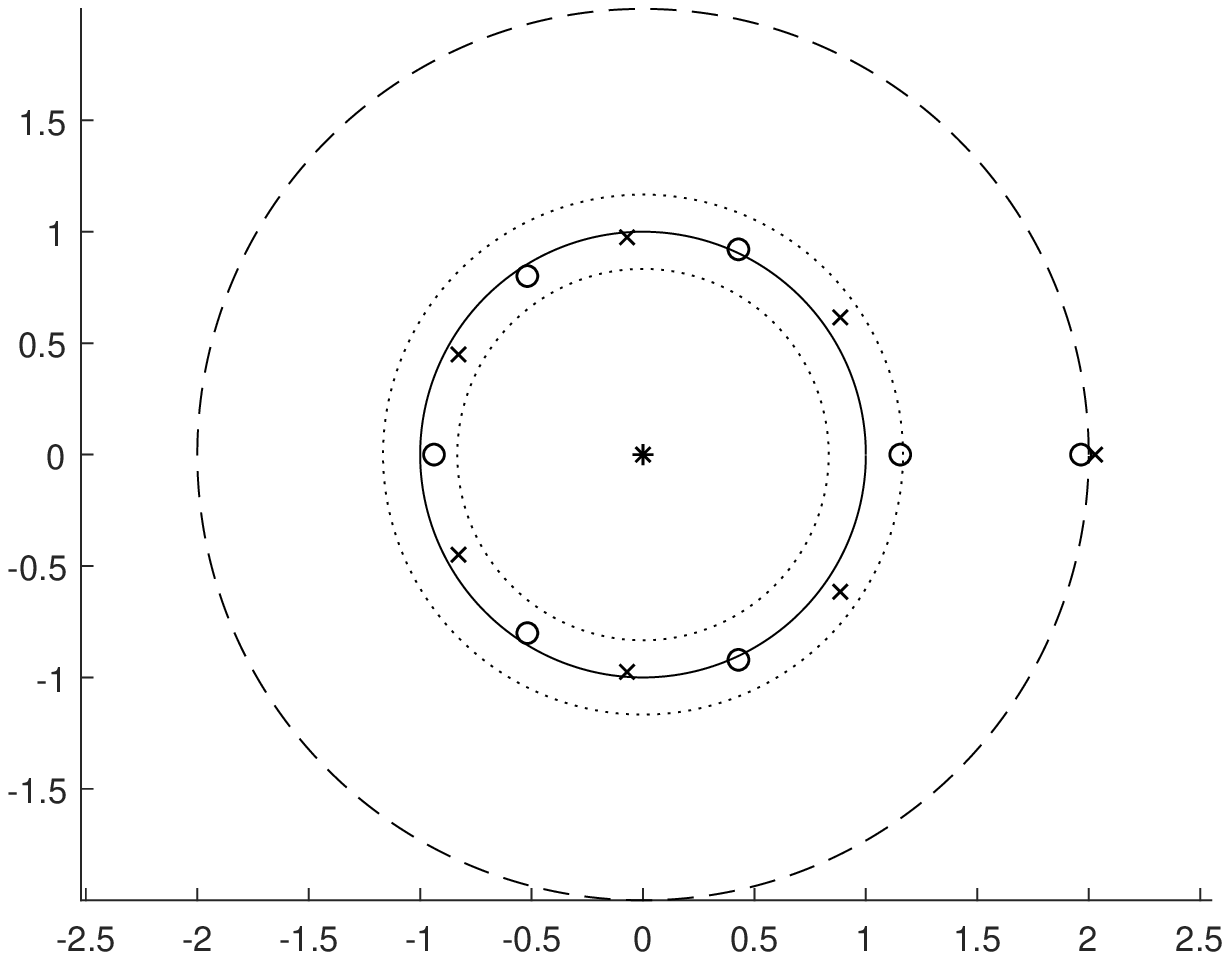} 
\caption{n=6}
\label{fig:roots-fn-gn-6}
\end{subfigure}
\hfill
\begin{subfigure}{0.475\textwidth}
\centering
\includegraphics[width=\textwidth]{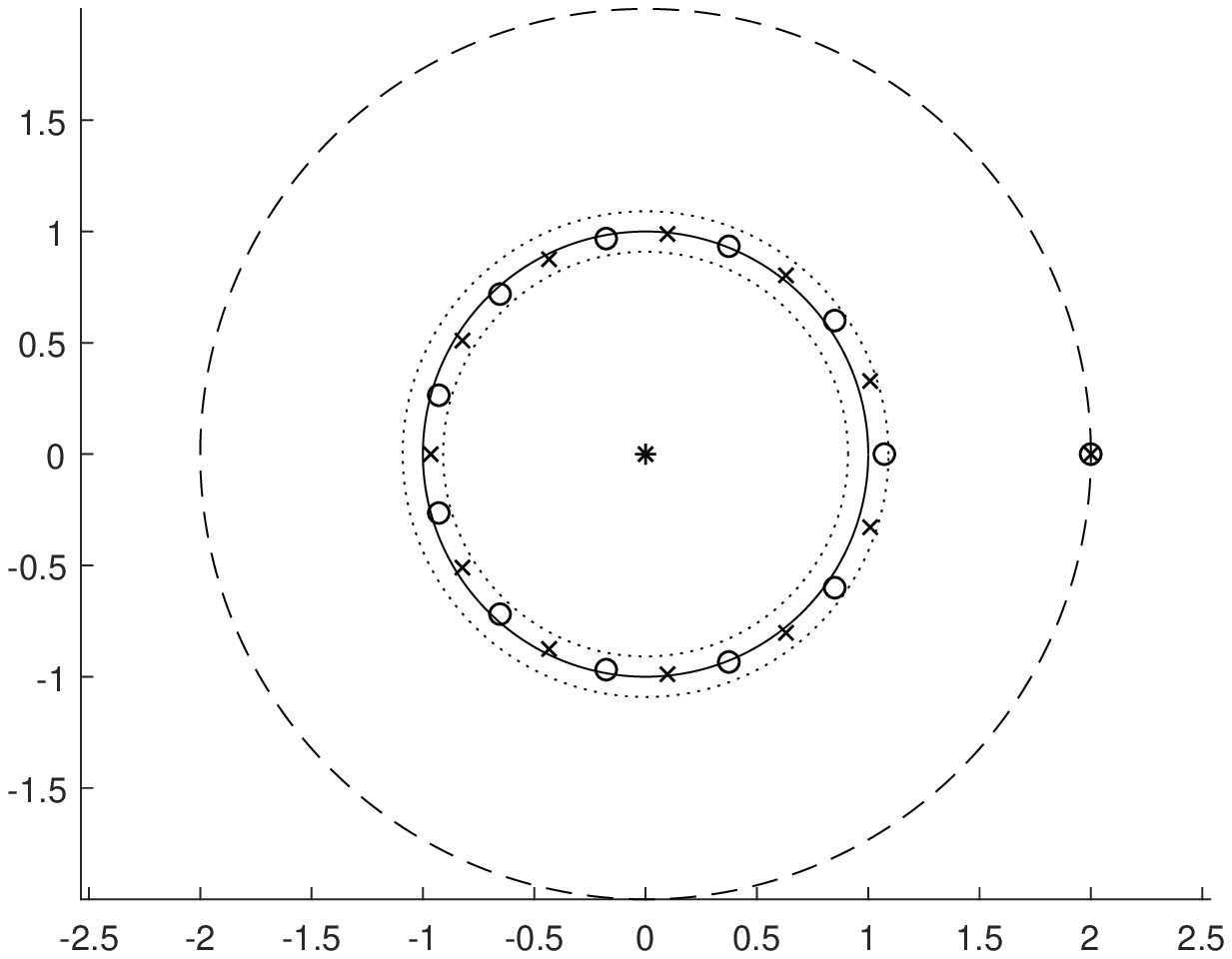} 
\caption{n=11}
\label{fig:roots-fn-gn-11}
\end{subfigure}

\vskip \baselineskip 

\begin{subfigure}{0.475\textwidth}
\centering
\includegraphics[width=\textwidth]{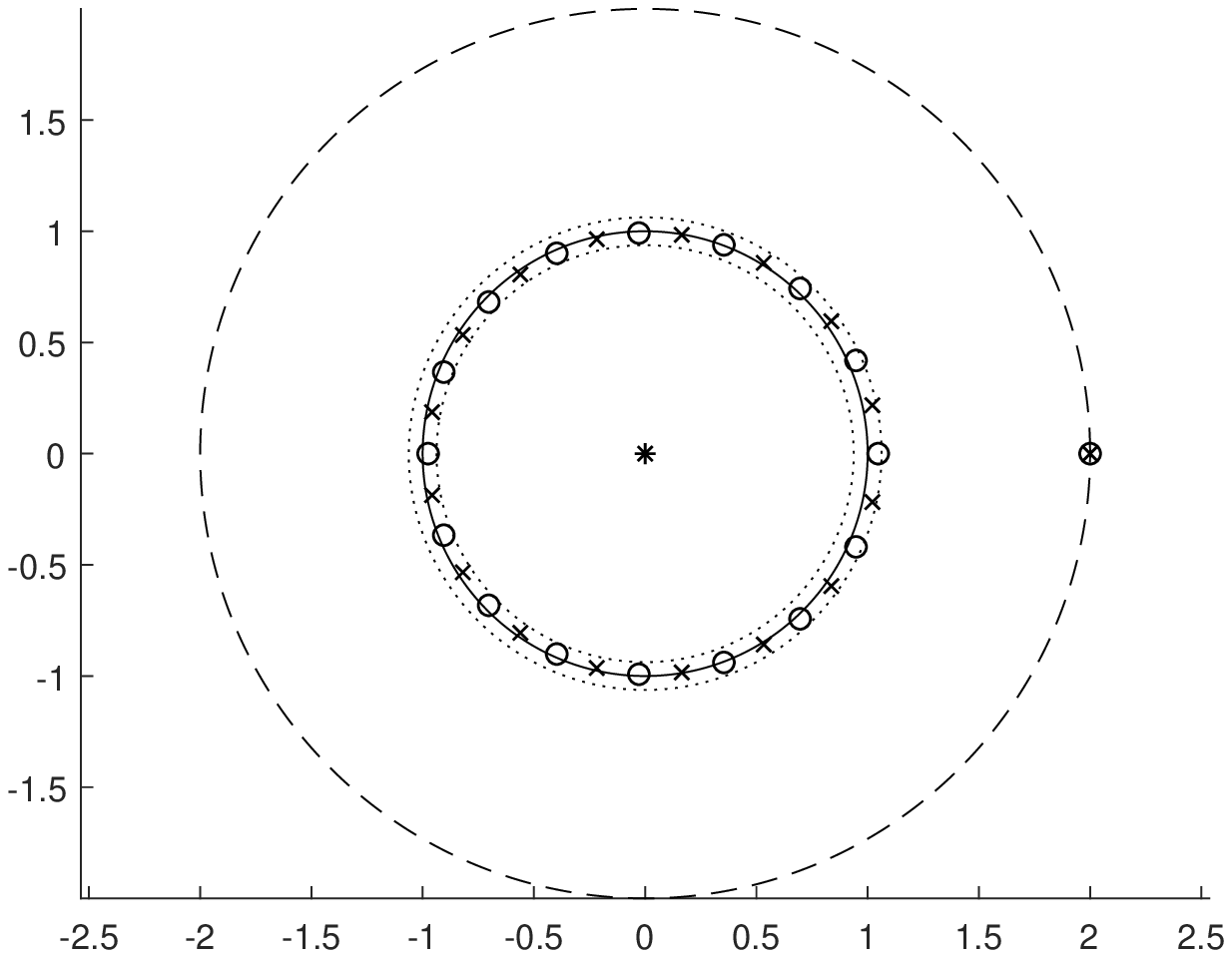}
\caption{n=16}
\label{fig:roots-fn-gn-16}
\end{subfigure}
\hfill
\begin{subfigure}{0.475\textwidth}
\centering
\includegraphics[width=\textwidth]{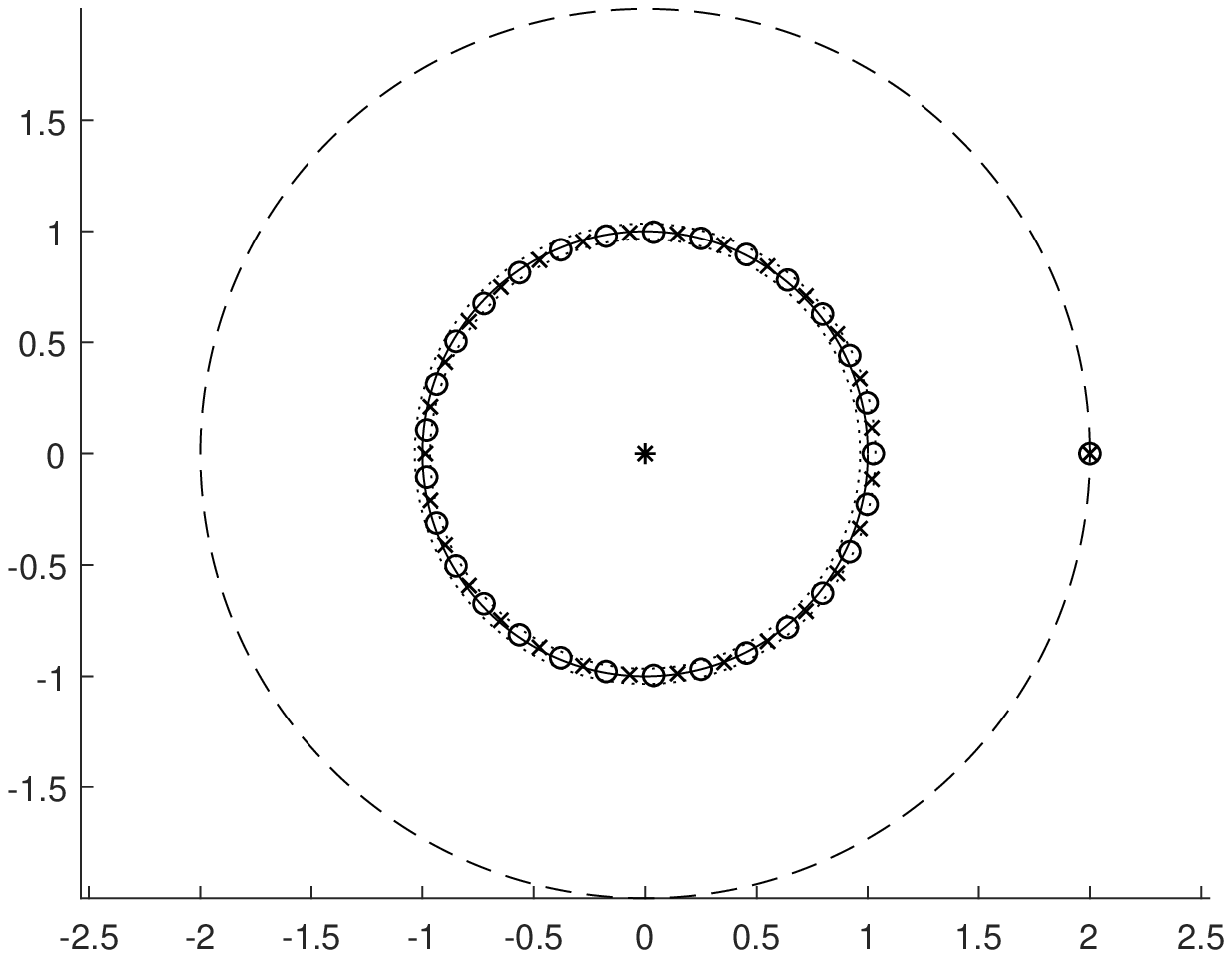}
\caption{n=29}
\label{fig:roots-fn-gn-29}
\end{subfigure}
\caption{Pictures of the roots of $f_n$ and $g_n$ for different values of $n$; roots of $f_n$ are marked with crosses, roots of $g_n$ are marked with circles, and the origin is marked with an asterisk (where $A_n$ has a double eigenvalue). The circle of radius $2$ is a dashed line, the unit circle is a solid line, and the circles with radius $1\pm n^{-1}$ are dotted lines.}
\label{fig:roots-fn-gn}
\end{figure}

\begin{proposition}
\label{prop:e-vals-markov}
The polynomial $f_n$ has the spectral radius of $A_n$, $2+2\kappa_n$, as a root, and $\kappa_n \sim \frac{1}{2^n}$. For $n\geq 5$, the polynomial $g_n$ has a real root at $2-2r_n < 2$, with $r_n \sim \kappa_n$. For all $n\geq 1$, all other roots of $f_n$ and $g_n$ are outside the circle of radius $1-n^{-1}$, and for all $n\geq 6$, all other roots of $f_n$ and $g_n$ are inside the circle of radius $1+n^{-1}$.
\end{proposition}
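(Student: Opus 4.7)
The plan is to pin down the two ``near-$2$'' roots by direct substitution and an intermediate-value argument, then corral all remaining roots into the annulus $1 - n^{-1} < |x| < 1 + n^{-1}$ using Rouch\'e's theorem on two circles.

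Recall from Lemma \ref{lem:paired-tent-markov} that $\kappa_n$ satisfies $(2+2\kappa_n)^n \kappa_n = 1$. Substituting $x = 2 + 2\kappa_n$ into $f_n$ yields $(2+2\kappa_n)^n(2\kappa_n) - 2 = 0$, so $2 + 2\kappa_n$ is a root of $f_n$; once the Rouch\'e bounds are in place, this root dominates all the others in modulus and is therefore the spectral radius. Taking logs in the defining identity and using $\kappa_n \to 0$ gives $(2+2\kappa_n)^n \to 2^n$, so $\kappa_n \sim 2^{-n}$. For the near-$2$ root of $g_n$, substitute $x = 2 - 2r$ to rewrite $g_n(x) = 0$ as $q(r) := (2 - 2r)^n r = 1$. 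Then $q(0) = q(1) = 0$, and $q$ has a unique maximum at $r = 1/(n+1)$ equal to $(2n/(n+1))^n/(n+1)$; a direct check shows this exceeds $1$ for $n \geq 5$ (at $n=5$ it is $(5/3)^5/6 > 2$), so by IVT $q(r) = 1$ has two solutions in $(0,1)$. Take $r_n$ to be the smaller one (producing the larger $x$-value). A crude estimate such as $q(2^{-(n-1)}) > 1$ shows $r_n \to 0$, and then repeating the logarithmic asymptotic in $(2 - 2r_n)^n r_n = 1$ gives $r_n \sim 2^{-n} \sim \kappa_n$.

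The remaining roots are handled by two applications of Rouch\'e's theorem. On $|x| = 1 - n^{-1}$, the triangle inequality together with the classical fact that $(1 - n^{-1})^n$ increases to $e^{-1}$ gives
\[
|x^n(x-2)| \leq (1 - n^{-1})^n (3 - n^{-1}) < 3e^{-1} < 2,
\]
so $|f_n - (-2)| < |-2|$ on the circle, and Rouch\'e says $f_n$ has no zeros inside $|x| \leq 1 - n^{-1}$; the same bound handles $g_n$. On $|x| = 1 + n^{-1}$, the minimum of $|x - 2|$ is attained at $x = 1 + n^{-1}$ and equals $1 - n^{-1}$, so
\[
|x^n(x-2)| \geq (1 + n^{-1})^n (1 - n^{-1}).
\]
The right side is a product of two positive increasing-in-$n$ sequences (the first increasing to $e$, the second to $1$); a direct calculation at $n = 6$ gives $(7/6)^6 (5/6) > 2$, so the bound exceeds $2$ for all $n \geq 6$. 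Hence $|f_n - 2|, |g_n - 2| < |x^n(x-2)|$ on this circle, and Rouch\'e says $f_n$ and $g_n$ each have the same number of zeros inside $|x| < 1 + n^{-1}$ as $x^n(x-2)$, namely $n$ (the $n$-fold zero at the origin; the zero at $x = 2$ lies outside). Since $\deg f_n = \deg g_n = n+1$ and each polynomial has the identified root $2 + 2\kappa_n$ (respectively $2 - 2r_n$) outside the disk, the remaining $n$ roots of each lie in the annulus.

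The main obstacle is the sharpness of the outer Rouch\'e estimate: the quantity $(1 + n^{-1})^n(1 - n^{-1})$ crosses $2$ between $n = 5$ and $n = 6$, which is exactly why the stated threshold in the proposition is $n \geq 6$. Once the inequality is verified at $n = 6$, the monotonicity of both factors handles all larger $n$ at once.
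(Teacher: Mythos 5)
Your proposal is correct and follows the same overall blueprint as the paper's proof: direct substitution to find $f_n$'s root at $2+2\kappa_n$, an intermediate-value argument for $g_n$'s near-$2$ root, and Rouch\'e's theorem on the two circles $\abs{z} = 1 \pm n^{-1}$. Where you diverge is in the technical details, and the comparison is instructive. For $g_n$'s root you change variables to $q(r) = (2-2r)^n r$, locate its unique maximum at $r = 1/(n+1)$ by calculus, verify $q_{\max} > 1$ for $n \geq 5$, and then invoke the intermediate value theorem; the paper instead evaluates $g_n$ directly at $2(1-2^{-n})$ and $2\bigl(1 - 2^{-n} - 2n\cdot 4^{-n}\bigr)$, obtaining a sign change. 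The paper's choice has the advantage of immediately bracketing $r_n$ between $2^{-n}$ and $2^{-n} + 2n\cdot 4^{-n}$, so $r_n \sim \kappa_n$ drops out with no further work, whereas your route requires the extra step of showing $r_n \to 0$ fast enough (namely $nr_n \to 0$) to conclude $(1-r_n)^{-n} \to 1$. That extra step is where you are a bit too glib: the phrase ``taking logs gives $(2+2\kappa_n)^n \to 2^n$'' is not literally meaningful, since both sides diverge; what you need (and what does hold, since $\kappa_n < 2^{-n}$ gives $n\kappa_n \to 0$) is $(1+\kappa_n)^n \to 1$, and similarly for $r_n$ once you have $r_n < 2^{-(n-1)}$. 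On the Rouch\'e side, your inner estimate via $(1-n^{-1})^n < e^{-1}$ yielding $3/e < 2$ is cleaner than the paper's binomial truncation, and your outer estimate via monotonicity of $(1+n^{-1})^n(1-n^{-1})$ matches the paper's (which expands to three binomial terms to get $(5/2 - 1/(2n))(1-1/n)$); both correctly locate the threshold at $n=6$. One small notational slip: on the outer circle you write $\abs{f_n - 2} < \abs{x^n(x-2)}$; the Rouch\'e comparison you mean is $\abs{f_n(x) - x^n(x-2)} = 2 < \abs{x^n(x-2)}$ (and likewise for $g_n$).
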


\begin{proof}
Observe that substituting $2+2\kappa_n$ into $f_n$ yields:
\begin{align*}
f_n(2+2\kappa_n) & = (2+2\kappa_n)^n(2+2\kappa_n-2) - 2 \\
& = 2\left( (2+2\kappa_n)^n\kappa_n - 1 \right) = 0,
\end{align*}
by the definition of $\kappa_n$. To see roughly how big $\kappa_n$ is, observe that \[ \frac{1}{2^n} > \frac{1}{(2+2\kappa_n)^n} = \kappa_n > \frac{1}{\left( 2+\frac{2}{2^n} \right)^n} = \frac{1}{2^n}\cdot\frac{1}{\left( 1+\frac{1}{2^n} \right)^n}. \] Since $(1+2^{-n})^n$ converges to $1$ as $n$ tends to infinity, we see that $\kappa_n \sim 2^{-n}$. 

Next, observe that applying $g_n$ to $2(1-2^{-n})$ yields 
\begin{align*} 
g_n(2(1-2^{-n})) & = 2^n\left(1-\frac{1}{2^n}\right)^n\left(2-2\cdot \frac{1}{2^n} -2\right) + 2 \\
& = -2\left( 1-\frac{1}{2^n} \right)^n + 2 > 0. 
\end{align*} 
Then, evaluate $g_n$ at $2(1-(1+2n/2^{-n})\cdot 2^{-n})$ and use Bernoulli's inequality (used here in the form $(1-x)^n > 1-nx$ for $0<x<1$ and $n\geq 1$):
\begin{align*} 
g_n\left( 2\left( 1-\left(\frac{1}{2^n} + \frac{2n}{4^n} \right) \right) \right) & = -2^{n+1}\left( 1-\left(\frac{1}{2^n} + \frac{2n}{4^n} \right) \right)^n \left(\frac{1}{2^n} + \frac{2n}{4^n} \right) + 2 \\
& \leq 2\left(1 - \left( 1-\left(\frac{n}{2^n} + \frac{2n^2}{4^n} \right) \right)\left( 1+\frac{2n}{2^n} \right) \right) \\
& = \frac{2n}{2^n}\left( -1 + \frac{4n}{2^n} + \frac{4n^2}{4^n} \right).
\end{align*} 
The quantity inside the parentheses is decreasing for $n \geq 2$ and is negative for $n\geq 5$, so by continuity of $g_n$ there exists a root $2-2r_n$ of $g_n$ for $n\geq 5$, where $2^{-n} < r_n < 2^{-n} + 2n4^{-n}$. Thus $r_n \sim 2^{-n} \sim \kappa_n$.

Lastly, we use Rouch\'e's Theorem to estimate the other roots of $f_n$ and $g_n$. Set $a(z) = 2$ and $b_n(z) = z^n(z-2)$. For $\abs{z} = 1-n^{-1}$, we have $\abs{z-2} \leq \abs{z}+2 = 3+n^{-1}$ and hence (noting that $(1-x)^n < (1+x)^{-n}$ for $0 < x < 1$ and $n \geq 1$):
\begin{align*}
\abs{b_n(z)} & = \abs{z}^n\abs{z-2} \leq \left( 1-\frac{1}{n} \right)^n\left( 3+\frac{1}{n} \right) \\
& < \frac{3 +1/n}{\left( 1+1/n \right)^n} \leq \frac{3+1}{1+\frac{n}{n}} = 2 = \abs{a(z)},
\end{align*}
where the last inequality came from the first two terms of the Binomial expansion. We apply Rouch\'e's Theorem to see that $a(z)$ and $a(z)\pm b_n(z) = g_n(z),-f_n(z)$ (so also $f_n(z)$) have the same number of roots inside $\abs{z} = 1-n^{-1}$: none, because $a(z)$ is constant and therefore has no roots. On the other hand, for $\abs{z} = 1+n^{-1}$, we have $\abs{z-2} \geq 2-\abs{z} = 1-n^{-1}$, so again using the Binomial expansion (three terms, this time), we get:
\begin{align*}
\abs{b_n(z)} & = \abs{z}^n\abs{z-2} \geq \left( 1+\frac{1}{n} \right)^n\left( 1-\frac{1}{n} \right) \\
& > \left( 1+ \frac{n}{n} + \frac{n(n-1)}{2n^2} \right)\left( 1-\frac{1}{n} \right) \\
& = \left( \frac{5}{2} - \frac{1}{2n} \right)\left( 1-\frac{1}{n} \right).
\end{align*}
This last quantity is clearly increasing, and for $n\geq 6$ it is larger than $2 = \abs{a(z)}$. Thus, for $n\geq 6$, Rouch\'e's Theorem says that $b_n(z)$ and $b_n(z)\pm a(z) = g_n(z),f_n(z)$ have the same number of roots inside $\abs{z} = 1 + n^{-1}$: $n$, because the $n+1$ roots of $b_n(z)$ are $0$ with multiplicity $n$ and $2$ with multiplicity $1$, and $2$ is certainly outside the circle of radius $1+n^{-1}$ if $n\geq 6$.
\end{proof}

\begin{corollary}
\label{cor:spec-radius}
For all $n\geq 1$, the spectral radius of $A_n$ is $2(1+\kappa_n)$, and so the spectral radius of $M_n$ is $1$.
\end{corollary}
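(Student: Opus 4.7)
The plan is to sandwich $\rho(A_n)$ between $2(1+\kappa_n)$ from below, using the eigenvalue we already have identified, and $2(1+\kappa_n)$ from above, using the fact that the Perron-Frobenius operator $P_{\kappa_n}$ is an $L^1$-contraction. The upper bound is really the only new content, since the lower bound is immediate from what came before.

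For the lower bound, I would combine Proposition \ref{prop:adj-spec-theory}, which identifies every non-zero eigenvalue of $A_n$ with a root of $f_n g_n$, with Proposition \ref{prop:e-vals-markov}, which exhibits $2+2\kappa_n$ as a root of $f_n$. Together these say that $2(1+\kappa_n)$ is genuinely an eigenvalue of $A_n$, so $\rho(A_n)\geq 2(1+\kappa_n)$.

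For the upper bound, I would invoke the standard fact that the Perron-Frobenius operator preserves the $L^1$-norm on non-negative functions (this is immediate from the Radon-Nikodym definition recalled in the introduction); splitting into positive and negative parts then yields $\|P_{\kappa_n}f\|_1\leq \|f\|_1$ for every $f\in L^1$. Since $V$ is $P_{\kappa_n}$-invariant, the restriction $P_{\kappa_n}|_V$ is a contraction in the ambient $L^1$-norm on $V$, so its spectral radius is at most $1$. On the finite-dimensional space $V$ the spectral radius is norm-independent, so by Lemma \ref{lem:markov-op} we conclude $\rho(M_n)\leq 1$, i.e., $\rho(A_n)\leq 2(1+\kappa_n)$. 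The two bounds match, and the scaling $M_n = (2(1+\kappa_n))^{-1}A_n$ then gives $\rho(M_n)=1$.

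The only substantive step is the $L^1$-contractivity of $P_{\kappa_n}$, which is imported from outside the determinant-free linear algebra framework. An entirely algebraic alternative is available: for $n\geq 6$ Proposition \ref{prop:e-vals-markov} already confines every non-$(2+2\kappa_n)$ root of $f_n g_n$ inside the disk of radius $1+n^{-1} < 2 < 2+2\kappa_n$, so the largest-modulus root is $2+2\kappa_n$; the five residual cases $n\in\{1,\dots,5\}$ can then be dispatched by writing down the low-degree polynomials $f_n$ and $g_n$ explicitly and comparing root moduli against $2+2\kappa_n$. I would lead with the $L^1$-contraction route, however, because it is uniform in $n$ and avoids splitting into cases.
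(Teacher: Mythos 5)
Your lower-bound step is exactly the paper's, but your upper bound takes a genuinely different route, and the approach is correct. The paper's proof of Corollary~\ref{cor:spec-radius} stays entirely at the level of the polynomials $f_n, g_n$: for $n\geq 6$ it invokes the Rouch\'e estimates of Proposition~\ref{prop:e-vals-markov} to confine the remaining roots, and for the residual cases $n\in\{1,\dots,5\}$ it defers to a computer check on the roots of $m_{A_n}$. Your main argument replaces all of that with the fact that the Perron--Frobenius operator is an $L^1$-contraction: since $V$ is $P_{\kappa_n}$-invariant and $\|P_{\kappa_n}f\|_1\leq\|f\|_1$, the operator norm of $P_{\kappa_n}|_V$ in the $L^1$-norm on $V$ is at most $1$, and since spectral radius is dominated by any operator norm (and the spectrum of an operator on a finite-dimensional space is a norm-independent, algebraic invariant), $\rho(M_n)\leq 1$, i.e.\ $\rho(A_n)\leq 2(1+\kappa_n)$. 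Combined with the eigenvalue $2+2\kappa_n$ furnished by Propositions~\ref{prop:adj-spec-theory} and~\ref{prop:e-vals-markov}, this pins down $\rho(A_n)$. What your argument buys is uniformity in $n$: no case split and no computer-assisted check for small $n$. What it costs is a step outside the advertised ``determinant-free linear algebra'' toolkit, importing an analytic property of transfer operators (though one that is standard and already latent in the paper's setup via Lemma~\ref{lem:markov-op} and the cited reference). One small correction to your sketched algebraic alternative (which is, in substance, the paper's own proof): Proposition~\ref{prop:e-vals-markov} does not confine \emph{every} root other than $2+2\kappa_n$ inside the disk of radius $1+n^{-1}$; the root $2-2r_n$ of $g_n$ also lies near $2$, so you must separately observe $2-2r_n<2<2+2\kappa_n$. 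This does not affect the conclusion, but the statement as you wrote it is imprecise.
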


\begin{proof}
For $n\leq 5$, one may use a computer to show that the only root of $m_{A_n}(x)$ at least of magnitude $2$ is $2+2\kappa_n$. For $n\geq 6$, we use Proposition \ref{prop:e-vals-markov} to conclude that the largest eigenvalue is $2+2\kappa_n$. The spectrum of $M_n$ is simply the spectrum of $A_n$ scaled by $(2+2\kappa_n)^{-1}$, so the spectral radius of $M_n$ is $1$.
\end{proof}

In \cite{horan-pf-thm}, the application of Proposition \ref{prop:e-vals-markov} is to provide a sharpness result for a general estimate on the exponential mixing rate for a class of non-autonomous dynamical systems that are perturbations of the map $T_{0}$. This computation is reproduced as the following Corollary, and describes how the second-largest eigenvalue $(2-2r_n)(2+2\kappa_n)^{-1}$ for $M_n$ approaches $1$ as $n$ tends to infinity.

\begin{corollary}
\label{cor:sec-eval}
The second largest eigenvalue for $M_n$, and hence for $P_n$, is asymptotically equivalent to $1-2\kappa_n$.
\end{corollary}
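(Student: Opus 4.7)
The plan is to combine the location of the second-largest root of $m_{A_n}$ (identified in Proposition \ref{prop:e-vals-markov}) with the scaling between $A_n$ and $M_n$, and then carry out an elementary asymptotic expansion.

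First I would identify which root of $m_{A_n}$ gives the second-largest eigenvalue of $A_n$. By Proposition \ref{prop:e-vals-markov}, the unique root of largest modulus is $2+2\kappa_n$ (a root of $f_n$), while $g_n$ contributes the real root $2-2r_n$, and for $n\geq 6$ all remaining roots of $f_n,g_n$ lie inside the circle of radius $1+n^{-1}$. Since $r_n\sim 2^{-n}\to 0$, for all sufficiently large $n$ we have $2-2r_n>1+n^{-1}$, so $2-2r_n$ is the second-largest eigenvalue of $A_n$. By Lemma \ref{lem:markov-op}, the eigenvalues of $M_n$ outside $\overline{B(0,(2(1+\kappa_n))^{-1})}$ are obtained by dividing those of $A_n$ by $2(1+\kappa_n)$, and since $(2-2r_n)/(2+2\kappa_n)$ is clearly larger than $(2(1+\kappa_n))^{-1}$ for large $n$, the second-largest eigenvalue of $M_n$ is
\[ \lambda_n := \frac{2-2r_n}{2+2\kappa_n} = \frac{1-r_n}{1+\kappa_n}. \]
(Small $n$ can be verified by direct computation, but the asymptotic statement only concerns $n\to\infty$.)

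Next I would rewrite $1-\lambda_n$ in a convenient form:
\[ 1-\lambda_n = \frac{(1+\kappa_n)-(1-r_n)}{1+\kappa_n} = \frac{\kappa_n+r_n}{1+\kappa_n}. \]
From Proposition \ref{prop:e-vals-markov} we have $r_n\sim\kappa_n$, i.e.\ $r_n/\kappa_n\to 1$, and $\kappa_n\to 0$, so
\[ \frac{1-\lambda_n}{2\kappa_n} = \frac{1}{1+\kappa_n}\cdot\frac{\kappa_n+r_n}{2\kappa_n} \longrightarrow 1 \cdot 1 = 1, \]
i.e.\ $1-\lambda_n\sim 2\kappa_n$, which is the asserted asymptotic equivalence $\lambda_n\sim 1-2\kappa_n$ (in the sense that the deviations from $1$ are asymptotic).

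There is essentially no obstacle here; the work was already done in Proposition \ref{prop:e-vals-markov}. The only thing to be careful about is justifying that $2-2r_n$ really is the second-largest eigenvalue (rather than being dominated by some other root near the unit circle), which is handled by comparing $2-2r_n\approx 2$ with the bound $1+n^{-1}$ from Rouché. The final step is a short computation verifying $r_n\sim\kappa_n$ implies $(\kappa_n+r_n)/(1+\kappa_n)\sim 2\kappa_n$.
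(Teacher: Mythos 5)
Your proposal is correct and follows essentially the same route as the paper: both identify $2-2r_n$ as the second-largest eigenvalue of $A_n$ via Proposition \ref{prop:e-vals-markov}, pass to $M_n$ by scaling, and then expand $(1-r_n)/(1+\kappa_n)$ using $r_n\sim\kappa_n$. Your extra check that $2-2r_n$ dominates the roots inside $\{\abs{z}\le 1+n^{-1}\}$ is a nice explicit remark, but otherwise the argument is the same.
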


\begin{proof}
For $n \geq 6$, we know that the second largest eigenvalue in modulus for $A_n$ is $2-2r_n$, so the second largest eigenvalue in modulus for $M_n$, and thus $P_n$ (by Lemma \ref{lem:markov-op}), is $(2-2r_n)(2+2\kappa_n)^{-1}$. By Proposition \ref{prop:e-vals-markov}, we have $r_n = \kappa_n + o(\kappa_n)$. Thus we have:
\begin{align*} 
\frac{2-2r_n}{2+2\kappa_n} & = (1-r_n)(1+\kappa_n)^{-1} = (1-\kappa_n + o(\kappa_n))(1-\kappa_n+o(\kappa_n)) \\
& = 1-2\kappa_n + o(\kappa_n). \qedhere
\end{align*}
\end{proof}

\begin{remark}
\label{rem:tensor}
We identified $J_n$ as a $(2n+4)$-by-$(2n+4)$ permutation matrix with ones along the anti-diagonal. To add to our knowledge about $J_n$, we can also identify $J_n$ as a $2$-by-$2$ flip, by using tensor products: $\C^{2n+4} \simeq E^+\otimes_{\C} \C^2$, and $J_n$ is the flip in the second coordinate.
\end{remark}

\begin{remark}
\label{rem:alg-geo}
In the proof of Proposition \ref{prop:adj-spec-theory}, we computed the minimal polynomial for $A_n$ by finding invariant subspaces and working with the restrictions of $A_n$ to those subspaces; the relation $A_n(A_n^{n+1} - 2A_n^n -2J_n) = 0$ reduced to the relations $A_n(A_n^{n+1} - 2A_n^n \mp 2I) = 0$ on the subspaces $E^{\pm}$, and these one-matrix relations yielded to standard techniques. However, we also had $J_n^2 = I$, and we could consider the simultaneous equations 
\begin{gather*}
A_n h_n(A_n,J_n) = A_n(A_n^{n+1} - 2A_n^n -2J_n) = 0, \\
J_n^2 - I = 0.
\end{gather*}
Is it possible to take an algebraic-geometric approach to finding the eigenvalues of $A_n$ and $J_n$ without reducing to the single-variable theory? The answer is yes! 

Briefly, by Hilbert's Nullstellensatz we see that the ideal of polynomials in two variables that vanish on the locus of $\{ xh_n(x,y),y^2-1\}$ is the same as the radical of the ideal $\cJ$ generated by $\{ xh_n(x,y),y^2-1\}$. Even better, one can show that $\cJ$ is actually radical, and that $\cJ$ is moreover equal to the ideal of polynomials that vanish when evaluated at $(A_n,J_n)$. Finally, the polynomial $m_{A_n}(x) = xf_n(x)g_n(x)$ is shown to be an element of $\cJ$, so we get that $A_n$ is diagonalizable in the same way as before; this means $\cJ$ is also equal to the ideal of polynomials that vanish on the pairs of eigenvalues $(\lambda,\mu)$ of $A_n$ and $J_n$. Hence the pairs of eigenvalues are exactly the locus of $\{ xh_n(x,y),y^2-1\}$, instead of just a subset, and solving for the roots of the two polynomials simultaneously yields the eigenvalues of $A_n$ and the anti/symmetric breakdown. This abstract perspective is another way to see the problem, though our initial proof was much less high-tech. The subsequent computations are not affected by the change.
\end{remark}

\section*{Spectral Properties of a Related System}

We may use our knowledge of $T_{\kappa_n}$ to study a related system. Define $\pi : [-1,1] \to [0,1]$ by $\pi(x) = \abs{x}$ and set $\tilde{T}_{\kappa} := \pi\circ T_{\kappa} = \abs{T_{\kappa}} : [0,1] \to [0,1]$, as depicted in Figure \ref{fig:factor-map}. We can ask the same questions about this map: does it have an invariant density? Is it mixing, and if so with what rate? Instead of repeating all of our work, however, we can use the relationship between $T_{\kappa}$ and $\tilde{T}_{\kappa}$ and the information about $T_{\kappa}$ to answer these questions, again by reducing the computations to painless matrix relations.

\begin{figure}[tbp]
\centering
\includegraphics[width=0.75\textwidth]{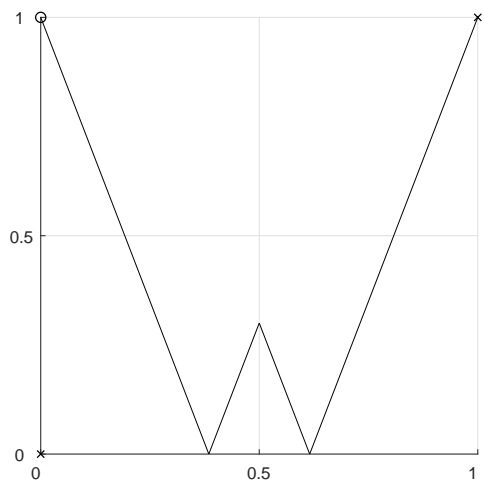}
\captionsetup{width=0.75\textwidth,labelfont=bf}
\caption{The map $\tilde{T}_{\kappa}$, for $\kappa = 0.3$.}
\label{fig:factor-map}
\end{figure}

Observe that because $T_{\kappa}$ is odd, for any $x\in [-1,1]$ we have that $T_{\kappa}$ maps $\{\pm x\}$ to $\{\pm T_{\kappa}(x)\}$. Looking at $\{\pm x\}$ as an equivalence class under $x\sim -x$, we see that the map $\pi$ defined above collapses each class to a single point in $[0,1]$: the shared absolute value of the elements of the class. From the definition of $\tilde{T}_{\kappa}$, then, it is clear that $\pi \circ T_{\kappa} = \tilde{T}_{\kappa} \circ \pi$.

In addition, for each $n\geq 1$, $\tilde{T}_n := \tilde{T}_{\kappa_n}$ is still Markov. The Markov partition is not quite the same as the partition on $[0,1]$ for $T_n$; the map is no longer monotonic on just two intervals, but rather four intervals. However, we can easily guess a partition; the interval $R_{n+4}$ should be split in two. Note that $1-\kappa_1 = \tfrac{1}{2}+\frac{\kappa_1}{2(1+\kappa_1)}$ and for all $n\geq 2$, $\tilde{T}^{n-1}_n(\kappa_n) = \tfrac{1}{2}+\tfrac{\kappa_n}{2(1+\kappa_n)}$; this point is the zero of $\tilde{T}_n$ larger than $1/2$, and the symmetric point $\tfrac{1}{2}-\tfrac{\kappa_n}{2(1+\kappa_n)}$ is the zero smaller than $1/2$.

\begin{lemma}
\label{lem:Markov-part-factor}
The map $\tilde{T}_n$ is Markov for each $n\geq 1$. For $n=1$, the Markov partition is \[ \Big\{ \left( 0, \kappa_1 \right), \left( \kappa_1, \tfrac{1}{2} \right), \left( \tfrac{1}{2}, 1-\kappa_1 \right) \left( 1-\kappa_1, 1 \right) \Big\} \] and for $n \geq 2$, the Markov partition is 
\begin{gather*}
\Big\{ \left( 0, \kappa_n \right), \left( \kappa_n, \tfrac{1}{2}-\tfrac{\kappa_n}{2(1+\kappa_n)} \right), \left(\tfrac{1}{2}-\tfrac{\kappa_n}{2(1+\kappa_n)}, \tfrac{1}{2} \right), \left( \tfrac{1}{2},  \tfrac{1}{2}+\tfrac{\kappa_n}{2(1+\kappa_n)} \right) \Big\} \\
\cup \Big\{ \left( \tilde{T}_n^{i+1}(\kappa_n), \tilde{T}_n^i(\kappa) \right) \Big\}_{i=1}^{n-2} \cup \Big\{ \left( \tilde{T}_n(\kappa_n), 1 \right) \Big\}.
\end{gather*}
The Markov partition has, in all cases, $n+3$ intervals.
\end{lemma}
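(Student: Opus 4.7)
My plan is to apply Lemma~\ref{lem:markov-points} to $\tilde{T}_n$, using the semi-conjugacy $\pi \circ T_n = \tilde{T}_n \circ \pi$ to import orbit information about $T_n$ from Lemma~\ref{lem:paired-tent-markov} rather than redoing the iteration analysis from scratch.

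The first step is to identify the set $E_0$ of monotonicity endpoints of $\tilde{T}_n$ on $[0,1]$. Restricted to $[0,1]$, the map $T_n$ is a single tent that dips below zero, with peak value $-\kappa_n$ at $x=1/2$. Solving $T_n(x) = 0$ on each linear branch locates the two zeros at $\tfrac{1}{2} \pm \tfrac{\kappa_n}{2(1+\kappa_n)}$, and taking the absolute value splits each branch of $T_n$ at its zero, so together with the peak $1/2$ and the domain endpoints $0$ and $1$ I get
\[ E_0 = \Big\{0,\ \tfrac{1}{2} - \tfrac{\kappa_n}{2(1+\kappa_n)},\ \tfrac{1}{2},\ \tfrac{1}{2} + \tfrac{\kappa_n}{2(1+\kappa_n)},\ 1\Big\}. \]

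Next I iterate. Both zeros in $E_0$ map to $0$; the boundary values satisfy $\tilde{T}_n(0^+) = \tilde{T}_n(1) = 1$; and $\tilde{T}_n(1/2) = \kappa_n$, so after one step the only new point is $\kappa_n$. Since $\tilde{T}_n^j(\kappa_n) = |T_n^j(\kappa_n)|$ by the semi-conjugacy, and since Lemma~\ref{lem:paired-tent-markov} shows $T_n^j(\kappa_n) \in (1/2,1)$ for $1 \leq j \leq n-1$ and $T_n^n(\kappa_n) = 0$, the $\tilde{T}_n$-orbit of $\kappa_n$ agrees with the $T_n$-orbit and lands back in $E_0$ at step $n$. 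Crucially, rearranging $(2+2\kappa_n)^n \kappa_n = 1$ gives $T_n^{n-1}(\kappa_n) = 1 - \tfrac{1}{2(1+\kappa_n)} = \tfrac{1}{2} + \tfrac{\kappa_n}{2(1+\kappa_n)}$, which is already in $E_0$; and on the right branch $T_n(x) - x = (1+2\kappa_n)(x-1) < 0$ for $x \in (1/2,1)$, so $T_n(\kappa_n) > T_n^2(\kappa_n) > \cdots > T_n^{n-1}(\kappa_n)$ strictly decreases into that endpoint. For $n \geq 2$ this yields
\[ E_{n-1} = E_0 \cup \{\kappa_n\} \cup \{T_n^i(\kappa_n)\}_{i=1}^{n-2}, \qquad E_n = E_{n-1}, \]
so Lemma~\ref{lem:markov-points} applies. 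Sorting these $n+4$ points along $[0,1]$ and taking consecutive open intervals produces precisely the stated $n+3$-piece partition.

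The one place I expect to need separate care, and the main bookkeeping obstacle throughout, is the case $n=1$, where several points in the generic formula collide. The relation $(2+2\kappa_1)\kappa_1 = 1$ forces $\kappa_1 = \tfrac{1}{2} - \tfrac{\kappa_1}{2(1+\kappa_1)}$ and $1-\kappa_1 = \tfrac{1}{2} + \tfrac{\kappa_1}{2(1+\kappa_1)}$, collapsing two pairs of $E_0$-points; and $\tilde{T}_1(\kappa_1) = |T_1(\kappa_1)| = 0 \in E_0$, so the $\kappa_n$-orbit closes up immediately and the iteration already terminates with $E_1 = E_0$. The remaining five distinct points give four intervals, matching the listed $n=1$ partition. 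The rest of the argument is a careful check that no splittings beyond the ones I have tracked can occur, which follows from the fact that all iterates of $E_0$-points not already addressed feed into the orbit segment $\{\pm 1, 0\} \subset E_0$.
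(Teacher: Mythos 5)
Your proposal is correct and follows the same route as the paper: apply Lemma~\ref{lem:markov-points} to $\tilde{T}_n$, compute $E_0$ from the monotonicity endpoints of $|T_{\kappa_n}|$ on $[0,1]$, and use the already-known orbit of $\kappa_n$ from Lemma~\ref{lem:paired-tent-markov} to close the iteration. You supply substantially more detail than the paper's terse proof (explicitly locating the zeros $\tfrac{1}{2}\pm\tfrac{\kappa_n}{2(1+\kappa_n)}$, noting the key identity $T_n^{n-1}(\kappa_n)=\tfrac{1}{2}+\tfrac{\kappa_n}{2(1+\kappa_n)}\in E_0$ so the iteration terminates one step earlier than one might expect, and resolving the $n=1$ collisions via $2\kappa_1^2+2\kappa_1-1=0$); one tiny slip is the phrase ``lands back in $E_0$ at step $n$,'' which you immediately and correctly sharpen to step $n-1$, so the conclusion is unaffected.
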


\begin{proof}
We again apply Lemma \ref{lem:markov-points}. For $n=1$, the Markov partition is simply the (interiors of the) intervals of monotonicity, since $\tilde{T}_1(\kappa_1) = 0$ and $\tilde{T}_1(1/2) = \kappa_1$; clearly, there are $4 = 1+3$ intervals. For $n\geq 2$, the point $\tfrac{1}{2} - \tfrac{\kappa_n}{2(1+\kappa_n)}$ is mapped to $0$, and the remainder of the points are just as in the case of $T_{\kappa_n}$. Because we have split one of the intervals in $[0,1]$ in two (but are only considering $[0,1]$, not $[-1,1]$), there are exactly $n+3$ elements in the Markov partition.
\end{proof}

The Markov partitions for $n=1,4$ are illustrated in Figure \ref{fig:Markov-part-factor-1-4}. We will denote the intervals in order left-to-right by $\{S_1\}_{1}^{n+3}$. For $n\geq 4$ the general form of the adjacency matrix $B_n$ is as in Figure \ref{fig:adj-matrix-factor}. For $n\leq 3$ some of the columns are combined. Observe that $B_n$ is almost identical to the bottom-right quadrant of $A_n$, with the exception of an extra column; this is expected, given how we modified the Markov partition by splitting $R_{n+4}$ into $S_2$ and $S_3$ while leaving the other intervals the same.

\begin{figure}[tbp]
\centering
\begin{subfigure}{0.475\textwidth}
\centering
\includegraphics[width=\textwidth]{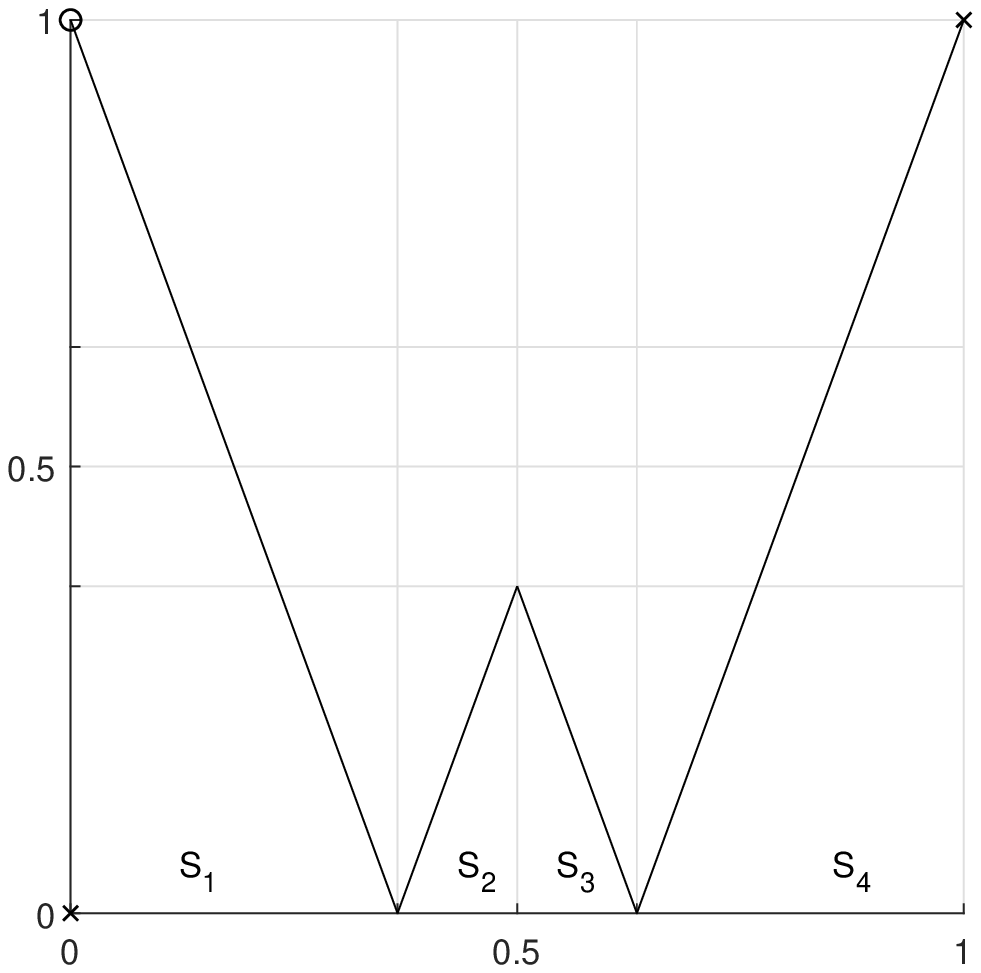}
\caption{n=1}
\label{fig:Markov-part-factor-1}
\end{subfigure}
\hfill
\begin{subfigure}{0.475\textwidth}
\centering
\includegraphics[width=\textwidth]{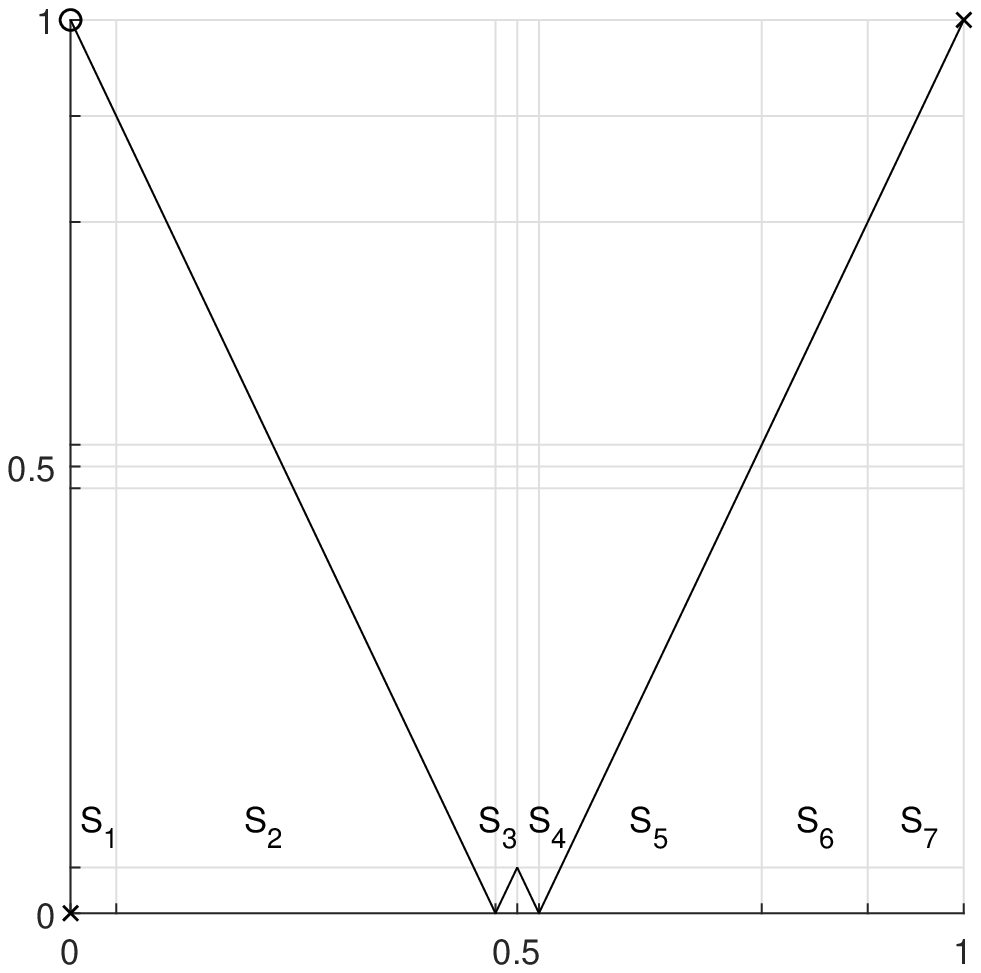}
\caption{n=4}
\label{fig:Markov-part-factor-4}
\end{subfigure}
\captionsetup{labelfont=bf}
\caption{Markov partitions for $\tilde{T}_n$, for $n=1,4$.}
\label{fig:Markov-part-factor-1-4}
\end{figure}

\begin{figure}[tbp]
\[ \begin{bmatrix}
0 & 1 & 1 & 1 & 1 & 0 & & 0 & 0 \\ 
0 & 1 & 0 & 0 & 1 & 0 & & 0 & 0 \\
0 & 1 & 0 & 0 & 1 & 0 & \dots & 0 & 0 \\
0 & 1 & 0 & 0 & 1 & 0 & & 0 & 0 \\
0 & 1 & 0 & 0 & 0 & 1 & & 0 & 0 \\
  &   & \vdots &&& & \ddots & & \\
0 & 1 & 0 & 0 & 0 & 0 & & 1 & 0 \\
0 & 1 & 0 & 0 & 0 & 0 & & 0 & 1 \\
1 & 0 & 0 & 0 & 0 & 0 & & 0 & 1
\end{bmatrix} \]
\captionsetup{labelfont=bf}
\caption{General form of the $(n+3)$-by-$(n+3)$ adjacency matrix $B_n$.}
\label{fig:adj-matrix-factor}
\end{figure}

We now compute the spectral data for $B_n$. Towards this goal, for notation let $\tilde{V}_n$ be the span of the functions $\{\mathds{1}_{S_i}\}_{i=1}^{n+3}$, and let $\tilde{\phi}: \tilde{V}_n \to \C^{n+3}$ be the isomorphism from Lemma \ref{lem:markov-op} with $\tilde{\phi}(\mathds{1}_{S_i}) = d_i$. Our proof will run through the action of $A_n$ on $E^+$; for each $i$ between $1$ and $n+2$, let $s_i = \frac{1}{2}(e_{n+3-i}+e_{n+2+i})$, and observe that $\{s_i\}_{i=1}^{n+2}$ is a basis for $E^+$. Then, call $C_n : \C^{n+2} \to \C^{n+2}$ the matrix representation of $A_n$ on $E^+ = \subspan_{\C}\{s_i\}_{i=1}^{n+2}$. Moreover, let $\iota$ be the matrix representations of the inclusion of $E^+$ into $\C^{n+3}$ by splitting up $s_2$ into $d_2 + d_3$, as pictured in Figure \ref{fig:iota-matrix}, so that $\iota(s_1) = d_1$, $\iota(s_2) = d_3+d_4$, and $\iota(s_{k}) = d_{k+1}$ for $k \geq 3$.

\begin{figure}
\centering
\[ \begin{bmatrix}
1 & 0 & 0 &   & 0 \\ 
0 & 1 & 0 &   & 0 \\
0 & 1 & 0 &   & 0 \\
0 & 0 & 1 &   & 0 \\
 & & & \ddots & 0 \\
0 & 0 & 0 &   & 1
\end{bmatrix} \]
\caption{The $(n+3)$-by-$(n+2)$ matrix $\iota$, representing the inclusion $E^+ \to \C^{n+3}$.}
\label{fig:iota-matrix}
\end{figure}

\begin{proposition}
\label{prop:adj-char-poly-factor}
Let $n\geq 1$. We have:
\begin{enumerate}
\item $\iota C_n = B_n \iota$, with $C_n$ as given in Figure \ref{fig:adj-matrix-E+};
\item the kernel of $B_n$ is $\ker(B_n) = \subspan_{\C}\{\big(d_3-d_4\big),\big(d_1+d_2 - (d_5+\dots+d_{n+3})\big)\}$;
\item the minimal polynomial for $B_n$ is $\mathrm{min}_{B_n}(x) = xf_n(x)$;
\item the characteristic polynomial for $B_n$ is $\mathrm{char}_{B_n}(x) = x^2f_n(x)$;
\item $B_n$ is diagonalizable, the spectral radius of $B_n$ is $2+2\kappa_n$, and all of the other eigenvalues of $B_n$ are zero or near the unit circle. The eigenvector corresponding to the spectral radius is $\iota(w)$, where $w\in E^+$ is the eigenvector corresponding to the spectral radius for $C_n$.
\end{enumerate}
\end{proposition}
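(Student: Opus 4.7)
The overall strategy is to treat $B_n$ as a minor enlargement of $C_n = A_n|_{E^+}$, exploiting the factor-map relation $\pi \circ T_n = \tilde{T}_n \circ \pi$. This semiconjugacy descends to the finite-dimensional invariant subspaces and is encoded exactly by $\iota$: the restriction of a symmetric indicator function on $[-1,1]$ to $[0,1]$ matches indicator functions of the $\tilde{T}_n$-partition, except on the unique interval (the pushforward of $R_{n+4}$) that the finer $\tilde{T}_n$-partition splits in two. With part (1) established, everything in parts (3)--(5) follows by transferring the spectral analysis of $C_n$ (Propositions \ref{prop:adj-spec-theory} and \ref{prop:e-vals-markov}) through $\iota$, while part (2) is a direct matrix check.

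For part (1), I would compute both sides of $\iota C_n = B_n \iota$ on each basis vector $s_i = \frac{1}{2}(e_{n+3-i} + e_{n+2+i})$. The action $C_n s_i$ is read off from $A_n$ acting on the symmetric pair (using $J_n A_n = A_n J_n$ to reduce to one of the two entries), and is then compared with $B_n$ applied to the column $\iota(s_i)$. The only interesting case is $i = 2$: the support $R_{n+1} \cup R_{n+4} = (-1/2, -\kappa_n) \cup (\kappa_n, 1/2)$ maps under $\pi$ onto $(\kappa_n, 1/2)$, which the $\tilde{T}_n$-partition refines as $S_2 \cup S_3$; the formula $\iota(s_2) = d_2 + d_3$ is exactly the bookkeeping needed to make the intertwining go through at this index.

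For part (2), inspection of Figure \ref{fig:adj-matrix-factor} shows that columns $3$ and $4$ of $B_n$ are identical, so $B_n(d_3 - d_4) = 0$, and summing columns $5$ through $n+3$ reproduces $B_n d_1 + B_n d_2$, giving the second listed kernel vector; the two are visibly linearly independent. For parts (3)--(5), the minimal polynomial of $C_n$ is $x f_n(x)$ by Proposition \ref{prop:adj-spec-theory}(2), so the intertwining gives $B_n f_n(B_n)\iota = \iota\, C_n f_n(C_n) = 0$, and hence $B_n f_n(B_n)$ vanishes on $\iota(E^+)$. Since $d_3 - d_4 \notin \iota(E^+)$ (every image vector has equal coefficients on $d_2$ and $d_3$, because $\iota(s_2) = d_2 + d_3$ is the only column of $\iota$ touching those basis vectors), we have $\C^{n+3} = \iota(E^+) \oplus \C(d_3 - d_4)$ and so $B_n f_n(B_n) = 0$ on all of $\C^{n+3}$. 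For the reverse divisibility, each of the $n+1$ roots of $f_n$ is an eigenvalue of $C_n$ with eigenvector in $E^+$ (Proposition \ref{prop:adj-spec-theory}(6)), and $\iota$ sends it to a nonzero eigenvector of $B_n$; combined with $0 \in \sigma(B_n)$ and separability from Lemma \ref{lem:poly-properties}, this forces $\mathrm{min}_{B_n}(x) = x f_n(x)$. Separability then gives diagonalizability, and $n+1$ simple nonzero eigenvalues together with the $2$-dimensional kernel account for all $n+3$ dimensions, so $\mathrm{char}_{B_n}(x) = x^2 f_n(x)$. The spectral radius is $2 + 2\kappa_n$ by Proposition \ref{prop:e-vals-markov}, with eigenvector $\iota(w)$ by the intertwining.

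The main obstacle is the case analysis in part (1): although conceptually forced by the factor-map relation, verifying $\iota C_n = B_n \iota$ column-by-column requires careful matching between the action of $A_n$ on the symmetric pairs indexed by $i$ and the columns of $B_n$, with the boundary indices $i = 1, 2$ and $i = n+1, n+2$ needing individual attention because of the distinctive structure of the partition near the middle and the endpoints. Once that bookkeeping is complete, the remainder of the proposition is a formal consequence of work already carried out for $A_n$.
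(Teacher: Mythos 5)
Your proposal is correct and follows essentially the same route as the paper: establish the intertwining $\iota C_n = B_n\iota$ from the semiconjugacy and partition refinement, transfer the minimal polynomial of $C_n$ through $\iota$, use $\C^{n+3} = \iota(E^+) \oplus \C(d_3-d_4)$ to upgrade $B_nf_n(B_n)\iota = 0$ to $B_nf_n(B_n) = 0$, then invoke separability and the dimension count. The only cosmetic differences are that you identify the kernel by direct inspection of the columns of $B_n$ rather than by symmetry of $\tilde{T}_n$, and you argue the reverse divisibility by exhibiting eigenvectors $\iota(v)$ for each root of $f_n$ rather than by noting $f_n(B_n)(d_3-d_4) = -2(d_3-d_4) \neq 0$; you also silently correct a typo in the text ($\iota(s_2) = d_2 + d_3$, matching Figure \ref{fig:iota-matrix}, not $d_3+d_4$).
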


\begin{figure}[tbp]
\[ \begin{bmatrix}
0 & 2 & 1 & 1 & & 0 & 0 & 0 \\
0 & 1 & 0 & 1 & & 0 & 0 & 0 \\
0 & 1 & 0 & 1 & & 0 & 0 & 0 \\
& \vdots & & & \ddots & & \vdots & \\
0 & 1 & 0 & 0 & & 1 & 0 & 0 \\
0 & 1 & 0 & 0 & & 0 & 1 & 0 \\
0 & 1 & 0 & 0 & & 0 & 0 & 1 \\
1 & 0 & 0 & 0 & & 0 & 0 & 1
\end{bmatrix} \]
\caption{The $(n+2)$-by-$(n+2)$ matrix $C_n$, representing the action of $A_n$ on $E^+$.}
\label{fig:adj-matrix-E+}
\end{figure}

\begin{proof}
First, note that the action of $A_n$ restricted to $E^+$ can be seen as identifying the vectors $e_{i}$ and $e_{2n+5-i}$ and looking at the action of $A_n$ on the vectors $e_{n+3}$ to $e_{2n+4}$, because $E^+ = \subspan_{\C}\{s_i\}_{i=1}^{n+2}$ with $s_i = \frac{1}{2}(e_{n+3-i}+e_{n+2+i})$. The columns of the matrix $A_n$ indicate the images under $T_n$ of the intervals $R_i$ for each $i$, and so the columns of the restriction $C_n$ indicate the images under $T_n$ of the intervals $R_{n+3}$ up to $R_{2n+4}$ under the identification of $R_i$ with $R_{2n+5-i}$ (considered with multiplicity). However, this is exactly what the columns of $B_n$ indicate, because $B_n$ is the adjacency matrix for $\tilde{T}_n$, taken with the refined partition $\{S_i\}_{i=1}^{n+3}$. Since $\iota$ represents the refinement of the partition, we have $\iota C_n = B_n \iota$. From this equality we can obtain the remainder of the results in Proposition \ref{prop:adj-char-poly-factor}.

Looking at $\tilde{T}_n$, it is clear that the kernel of $B_n$ is equal to $\subspan_{\C}\{\big(d_3-d_4\big),\big(d_1+d_2 - (d_5+\dots+d_{n+3})\big)\}$, because these two vectors represent the two facets of symmetry in $\tilde{T}_n$ (the symmetry in the long branches and the symmetry in the short branches).

To find the minimal polynomial for $B_n$, recall from Proposition \ref{prop:adj-spec-theory} that the minimal polynomial for $A_n$ restricted to $E^+$ is equal to $xf_n(x)$. Thus, we have \[ B_nf_n(B_n)\iota = \iota C_nf_n(C_n) = 0, \] since $C_n$ represents $A_n$ acting on $E^+$ and so satisfies the minimal polynomial. We also have that \[ \C^{n+3} = \Imag(\iota) \oplus \subspan_{\C}\{d_3-d_4\}, \] because $\iota$ is injective (with rank $n+2$) and $d_3-d_4$ is not in the image of $\iota$. Since $B_nf_n(B_n)$ annihilates both the image of $\iota$ and $d_3-d_4$ but $f_n(B_n)(d_3-d_4) = -2(d_3-d_4)$ and $B_n(d_1) \ne 0$, we see that the minimal polynomial of $B_n$ is $m_{B_n}(x) = xf_n(x)$. The characteristic polynomial for $B_n$ is $\chi_{B_n}(x) = x^2f_n(x)$, of course, because the kernel of $B_n$ is two-dimensional and the degree of $\chi_{B_n}(x)$ is $n+3$.

Finally, the minimal polynomial for $B_n$ is separable, so $B_n$ is diagonalizable. By Proposition \ref{prop:e-vals-markov}, the largest eigenvalue of $B_n$ is $2+2\kappa_n$, and all other eigenvalues of $B_n$ zero or near the unit circle (asymptotically). If $w\in E^+$ is the eigenvector corresponding to $2+2\kappa_n$ for $C_n$, then \[ B_n(\iota(w)) = \iota(C_nw) = (2+2\kappa_n)\iota(w), \] so $\iota(w)$ is the eigenvector for $B_n$ corresponding to $2+2\kappa_n$.
\end{proof}

Observe that $B_n$ shares no eigenvalues corresponding to the antisymmetric eigenvectors for $A_n$; this makes sense, since the map $\pi$ collapsed all of those vectors to $0$, and we are left with the the symmetric eigenvectors. It did, however, introduce a new kernel vector, by introducing a new aspect of symmetry.

In addition, note that we could not simply apply Lemma \ref{lem:markov-op} with the matrix $C_n$, because $\tilde{T}_n$ is not Markov with respect to the partition $\{R_i\}_{i=n+3}^{2n+4}$. However, the relationship between $C_n$ and $B_n$ allowed us to painlessly translate facts about $A_n$ (and $C_n$) into facts about $B_n$, the actual adjacency matrix for $\tilde{T}_n$.

\begin{corollary}
\label{cor:sec-eval-factor}
The second-largest eigenvalue of the Perron-Frobenius operator for $\tilde{T}_n$ has modulus at most $(1+n^{-1})(2+2\kappa_n)^{-1}$ (for $n\geq 6$), which is asymptotically equivalent to $\frac{1}{2}(1+n^{-1})$.
\end{corollary}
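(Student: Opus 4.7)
The plan is to transfer the spectral results already known for the matrix $B_n$ to the Perron-Frobenius operator $\tilde{P}_n$ for $\tilde{T}_n$ via Lemma~\ref{lem:markov-op}, which applies because Lemma~\ref{lem:Markov-part-factor} shows that $\tilde{T}_n$ is Markov with partition $\{S_i\}_{i=1}^{n+3}$. Essentially, the corollary is a bookkeeping exercise combining this reduction with Propositions~\ref{prop:adj-char-poly-factor} and~\ref{prop:e-vals-markov}.

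First, I would apply Lemma~\ref{lem:markov-op} to $\tilde{T}_n$: the spectrum of $\tilde{P}_n$ outside the closed disk of radius $(2(1+\kappa_n))^{-1}$ coincides with $(2(1+\kappa_n))^{-1}\,\sigma(B_n)$ outside that disk. Then Proposition~\ref{prop:adj-char-poly-factor} identifies $\sigma(B_n)$ as $\{0\}$ together with the roots of $f_n$, with the spectral radius attained at the real root $2+2\kappa_n$ of $f_n$. For $n\geq 6$, Proposition~\ref{prop:e-vals-markov} further asserts that every \emph{other} root of $f_n$ lies strictly inside the circle of radius $1+n^{-1}$, so the second-largest modulus in $\sigma(B_n)$ is at most $1+n^{-1}$. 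Scaling by $(2+2\kappa_n)^{-1}$, the second-largest-modulus eigenvalue of $\tilde{P}_n$ lying outside the essential spectral radius disk is at most $(1+n^{-1})(2+2\kappa_n)^{-1}$. Because the essential spectral radius itself is $(2(1+\kappa_n))^{-1} = (2+2\kappa_n)^{-1} < (1+n^{-1})(2+2\kappa_n)^{-1}$, the same bound dominates anything inside that disk, yielding the claimed upper bound on every eigenvalue of $\tilde{P}_n$ apart from $1$.

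For the asymptotic claim, Proposition~\ref{prop:e-vals-markov} gives $\kappa_n \sim 2^{-n}$, so in particular $\kappa_n = o(n^{-1})$, and thus
\[ (1+n^{-1})(2+2\kappa_n)^{-1} = \tfrac{1}{2}(1+n^{-1})(1+\kappa_n)^{-1} = \tfrac{1}{2}(1+n^{-1})\bigl(1 + o(n^{-1})\bigr) \sim \tfrac{1}{2}(1+n^{-1}), \]
completing the argument.

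The only point worth a moment's care is verifying that the bound from Lemma~\ref{lem:markov-op} really handles \emph{all} of the non-dominant spectrum of $\tilde{P}_n$ and not only the part outside the essential spectral radius disk; this is what the elementary comparison $(2+2\kappa_n)^{-1} < (1+n^{-1})(2+2\kappa_n)^{-1}$ settles. I do not anticipate any substantive obstacle beyond this, since the serious work on the roots of $f_n$ has already been done in Proposition~\ref{prop:e-vals-markov}.
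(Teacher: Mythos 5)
Your proof is correct and follows essentially the same approach as the paper: pass from the Perron--Frobenius operator of $\tilde{T}_n$ to the matrix $B_n$ via Lemma~\ref{lem:markov-op}, identify the non-zero spectrum of $B_n$ as the roots of $f_n$ via Proposition~\ref{prop:adj-char-poly-factor}, bound the subdominant roots using Proposition~\ref{prop:e-vals-markov}, and finish with the asymptotic arithmetic using $\kappa_n \sim 2^{-n}$. Your explicit check that the essential-spectral-radius disk of radius $(2+2\kappa_n)^{-1}$ is dominated by the claimed bound $(1+n^{-1})(2+2\kappa_n)^{-1}$ is a small but welcome precision that the paper leaves implicit.
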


\begin{proof}
The spectral radius of $B_n$ is still $2+2\kappa_n$, by Corollary \ref{cor:spec-radius} and Proposition \ref{prop:adj-char-poly-factor}, so the spectral radius of the Perron-Frobenius operator for $\tilde{T}_n$ is $1$ and the second-largest eigenvalue has modulus at most $1+n^{-1}$ divided by $2+2\kappa_n$, using Proposition \ref{prop:adj-spec-theory} to get the upper bound. We then have (since $\kappa_n \sim 2^{-n}$):
\begin{align*}
\frac{1+n^{-1}}{2(1+\kappa_n)} & = \frac{1}{2}\left(1+\frac{1}{n}\right)(1 - \kappa_n + o(\kappa_n)) \\
& = \frac{1}{2}\left( 1 + \frac{1}{n} - \kappa_n + o(\kappa_n) \right) = \frac{1}{2}\left( 1+\frac{1}{n} + o\left(\frac{1}{n}\right) \right),
\end{align*}
which shows that $(1+n^{-1})(2+2\kappa_n)^{-1}$ is asymptotically equivalent to $\frac{1}{2}(1+n^{-1})$.
\end{proof}

\section*{Conclusion for Mixing Times}

At the beginning of this paper, we asked about mixing times and mixing rates for dynamical systems. We can now answer that question for our two systems, $T_n$ and $\tilde{T}_n$. For $T_n$, we have shown that the second-largest eigenvalue of the Perron-Frobenius $P_n$ is approximately $1-2\kappa_n$ (Corollary \ref{cor:sec-eval} and Lemma \ref{lem:markov-op}). Thus the mixing time for $T_n$ is, ignoring a scale factor, \[ \frac{1}{\abs{\log(1-2\kappa_n)}} \sim \frac{1}{2\kappa_n} \sim 2^{n-1}, \] using the fact that $\kappa_n \sim 2^{-n}$.

On the other hand, for $\tilde{T}_n$, Corollary \ref{cor:sec-eval-factor} says that the second-largest eigenvalue of the Perron-Frobenius operator has modulus at most $(1+n^{-1})(2+2\kappa_n)^{-1}$. By a similar computation, the mixing time for $\tilde{T}_n$ (with $n\geq 6$) is $O(1)$, which is much smaller than the mixing time for $T_n$. 

This result matches our intuition: for $T_n$, taking the perturbation to zero (or $n$ to infinity) leads to no mixing between the two halves, so the mixing time should tend to infinity, whereas for $\tilde{T}_n$, taking the perturbation to zero leads to a mixing tent map, and hence the mixing time should approach that for the unperturbed map. The difference in the orders of the mixing times indicates significant dynamical information about how these two systems are distinct, and we obtained this information by performing calculations with matrices (without touching the matrices themselves) and some analysis of roots of polynomials.

\acknowledge{Acknowledgements:} The author would like to thank Anthony Quas for the prodding to consider how elegant and satisfying this collection of ideas really is.
\endacknowledge

\bibliographystyle{abbrv}
\bibliography{expos-refs}

\begin{biog}
\item[Joseph Horan] is a Ph.D.\ candidate at the University of Victoria, for now. He is interested in dynamical systems, ergodic theory, and mathematics education, and is very excited to see beautiful mathematics.
\begin{affil}
Department of Mathematics and Statistics, University of Victoria, Victoria, BC, Canada V8P 5C2\\
jahoran@uvic.ca
\end{affil}
\end{biog}

\end{document}